\documentclass[11pt]{amsart}
\usepackage{amsmath, amssymb,amscd}
\usepackage[mathscr]{eucal}
\usepackage{amscd}
\usepackage[all, cmtip]{xy}

\usepackage[top=2.3cm,left=2.2cm,right=2.2cm,bottom=2.3cm]{geometry}
\parindent=0pt 
\parskip=10pt


\renewcommand{\thefootnote}{\fnsymbol{footnote}}
\newcommand{\authorfootnotes}{\renewcommand\thefootnote{\@fnsymbol\c@footnote}}%
 
\theoremstyle{plain} 
\newtheorem{theorem}{Theorem}
\newtheorem{lemma}[theorem]{Lemma}

\newtheorem{corollary}[theorem]{Corollary}

\newtheorem{conjecture}[theorem]{Conjecture}

\theoremstyle{definition} 

\theoremstyle{remark} 

\numberwithin{theorem}{section}
\numberwithin{equation}{section}


\newcommand{\texteqn}[1]{\hspace{4pt}\text{#1}\hspace{4pt}}

\newcommand{\legendre}[2]{\left(\frac{#1}{#2}\right) }
\newcommand{\comment}[1]{}

\def\e{\varepsilon}

\newcommand{\ga}{\alpha}
\newcommand{\gb}{\beta}
\newcommand{\gd}{\delta}
\newcommand{\gD}{\Delta}
\newcommand{\gs}{\sigma}


\newcommand{\mc}[1]{\mathcal{#1}}

\newcommand{\ul}[1]{\underline{#1}}
\newcommand{\upone}[1]{{#1}^{(1)}}

\def\Z{\mathbb Z}
\def\N{\mathbb N}
\def\Q{\mathbb Q}

\def\C{\mathbb C}
\def\F{\mathbb F}


\def\Re{\operatorname{Re}}
\def\deg{\operatorname{deg}}

\def\mod{\operatorname{mod}}

\def\gcd{\operatorname{gcd}}

\def\log{\operatorname{log}}

\def\sgn{\operatorname{sgn}}

\def\Box{{\operatorname{Box}}}

\begin{document}
\title[]{Distribution of the trace of Frobenius on average for rank 2 Drinfeld modules}

\author{Abel Castillo}
\address[]
{
Department of Mathematics, Statistics and Computer Science, University of Illinois at Chicago, 851 S Morgan St, 322
SEO, Chicago, 60607, IL, USA.
} 
\email[]{abel65535@gmail.com}

\begin{abstract} 
Let $q$ be an odd prime power, $a \in \F_q[T]$ and $u \in \F_q^*$. Provided $q \geq 17$, we compute the average number of primes $p$ for which the characteristic polynomial of the Frobenius at $p$ is $X^2 - aX + up$ over a family of rank 2 Drinfeld $\F_q[T]$-modules. Our results give asymptotic formulas in the $x$-limit. \end{abstract}
\maketitle

\section{Introduction}

Let $q$ be a power of an odd rational prime. Let $\F$ be the field with $q$ elements. Let $A:=\F[T]$ and $F:=\F(T)$. Since $A$ is a unique factorization domain, non-zero prime ideals correspond to monic irreducible polynomials in $T$. We always use the letter $p$ to denote monic irreducibles in $A$. For $a \in A$, define $|a| := q^{\deg a}$ and $\sgn a$ to be the leading coefficient of $n$ as a polynomial in $T$.

Let $F\{\tau\}$ be the non-commutative polynomial ring with the commutation rule $\tau a = a^q \tau$ for all $a \in F$. A Drinfeld module (more precisely, a Drinfeld $A$-module) over $F$ of rank $r$ is given by an $\F$-algebra homomorphism $\Phi:A \to F\{ \tau \}$, $n \mapsto \Phi_n$, where the image of $T$ is 
$$
\Phi_T = T + c_1 \tau + \cdots + c_r \tau^r,
$$ 
with $c_i \in F$, $c_r \neq 0$, and $r \geq 1$. 

For all but finitely many monic irreducibles $p$ in $A$, the reduction of $\Phi$ modulo $p$ is a Drinfeld module over the residue field of $p$ of rank $r$, and the Frobenius endomorphism at $p$ satisfies a polynomial of degree $r$, which we refer to as the characteristic polynomial of Frobenius at $p$ for $\Phi$. This polynomial has coefficients in $A$, and (additive inverse of) the next-to-leading coefficient of this polynomial $a_p(\Phi)$ gives rise to interesting arithmetic questions. For instance, one can fix an element $a \in A$ and ask for the distribution of primes $p$ for which $a_p(\Phi)=a$.

Interest in this question can be traced back to the analogous question for elliptic curves over $\Q$. In \cite{LangTrotter1976}, Lang and Trotter give the following conjecture.

\begin{conjecture}
Let $E$ be an elliptic curve over $\Q$ without complex multiplication, and fix an integer $a$. For $\ell \in \N$ a prime of good reduction, let $a_\ell$ be the trace of Frobenius at $\ell$ of $E$. Then,
$$
\# \{
\ell \texteqn{prime in $\Z$:} 1 \leq \ell \leq x, a_{\ell}(E) = a
\}
\sim
C_{E,a}\frac{\sqrt{x}}{\log x},
$$
where $C_{E,a}$ is a constant depending only on $E$ and $a$.
\end{conjecture}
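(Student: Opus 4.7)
I would attack this by combining the Galois-theoretic framework of Lang--Trotter with effective Chebotarev, Sato--Tate, and a sieve that handles both pieces simultaneously. The conjectural constant factors as $C_{E,a}=C_{E,a}^{\mathrm{Gal}}\cdot C_{E,a}^{\mathrm{ST}}$ corresponding to these two inputs, and the $\sqrt{x}/\log x$ scaling arises from their interaction.

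\emph{Galois and archimedean heuristics.} Since $E$ has no complex multiplication, Serre's open image theorem yields an integer $m_E$ such that the adelic image $\rho_E(\Gal(\ol{\Q}/\Q))\subset \GL_2(\hat{\Z})$ is open and surjects onto $\GL_2(\Z/\ell\Z)$ for every $\ell\nmid m_E$. Chebotarev applied to the $M$-division field $\Q(E[M])$ gives the density of $\ell$ with $a_\ell(E)\equiv a\pmod M$ as a ratio of conjugacy-class counts, and an appropriately normalised $M\to\infty$ limit produces the Galois factor $C_{E,a}^{\mathrm{Gal}}$. In parallel, the Sato--Tate theorem (unconditional for non-CM $E/\Q$) shows $a_\ell/(2\sqrt{\ell})=\cos\theta_\ell$ equidistributes against $(2/\pi)\sin^2\theta\,d\theta$ on $[0,\pi]$, so the equality $a_\ell=a$ pins $\theta_\ell$ into a window of width $\asymp 1/\sqrt{\ell}$, giving local density $\tfrac{2}{\pi\sqrt{\ell}}\sqrt{1-a^2/(4\ell)}$; after summation against $\pi(x)$ this produces the predicted $C_{E,a}\sqrt{x}/\log x$.

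\emph{Sieve assembly.} I would then run a Selberg upper-bound sieve on $\{\ell\le x : a_\ell=a\}$, sifting by the congruence conditions $a_\ell\equiv a\pmod d$ for $d$ ranging over small primes coprime to $m_E$. Effective Chebotarev in $\Q(E[d])$, under GRH, controls the error terms uniformly up to sifting level $x^{1/2-\e}$; after optimisation this yields an upper bound of the correct order $O(\sqrt{x}/\log x)$ and, in fact, matches $C_{E,a}$ up to an absolute sieve-theoretic factor.

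\emph{The hard part.} The principal obstacle is promoting this upper bound to the true asymptotic with exactly the constant $C_{E,a}$, and this is where the attack meets the parity barrier: the event ``$a_\ell=a$'' is a single algebraic condition on $\Frob_\ell$ whose indicator admits no known bilinear or Heath-Brown-type decomposition, so a Selberg sieve alone cannot separate the main term from its parity-twisted companion. The proof therefore rests on one of two inputs. Either (i) one produces an identity for the indicator of $\{a_\ell=a\}$, for example by packaging it as a truncated Mellin transform of an auxiliary $L$-function with controllable dual-side contributions; or (ii) one establishes a power-saving Frobenius equidistribution statement that is uniform in archimedean windows of width $1/\sqrt{\ell}$, presumably via a quantitative refinement of the Taylor--Harris--Shepherd-Barron--Clozel automorphy machinery that underlies Sato--Tate. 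I would concentrate all remaining effort on route (ii), since the Galois calculation, the Sato--Tate heuristic, and the upper-bound sieve are essentially routine by comparison, and because uniformity in short archimedean windows is the natural, if deep, strengthening of existing equidistribution technology needed to close the parity gap and deliver the asymptotic with the predicted constant.
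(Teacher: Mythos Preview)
The statement you are attempting to prove is stated in the paper as a \emph{Conjecture}, not a theorem; the paper explicitly says ``The conjecture remains unproven'' and only cites partial progress in the form of upper bounds. There is therefore no proof in the paper to compare your attempt against.

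Your proposal is not a proof either, and to your credit you essentially say so. Everything up to the paragraph labeled ``The hard part'' is standard: the Galois heuristic via Serre's open image theorem, the Sato--Tate density calculation giving local weight $\asymp \ell^{-1/2}$, and a Selberg sieve under GRH-effective Chebotarev producing an upper bound of the right order of magnitude. These steps reproduce what is already in the literature (e.g.\ the conditional upper bounds of Serre, Murty--Murty--Saradha, Cojocaru--Fouvry--Murty) and do not go beyond it.

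The genuine gap is exactly where you place it. Your argument terminates in a dichotomy: either (i) an as-yet-unknown identity expressing the indicator of $\{a_\ell=a\}$ in a form amenable to bilinear analysis, or (ii) a quantitative Sato--Tate theorem uniform in archimedean windows of width $\ell^{-1/2}$. Neither input exists. Route (ii) in particular is vastly beyond current technology: the potential-automorphy machinery underlying Sato--Tate yields equidistribution in \emph{fixed} intervals with no effective rate, let alone uniformity down to shrinking windows of size $\ell^{-1/2}$. The parity obstruction you name is real and is precisely why the Lang--Trotter conjecture remains open. What you have written is an accurate diagnosis of where the difficulty lies and a reasonable research outline, but it is not a proof, and no proof is currently known.
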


The conjecture remains unproven, but progress has been made in the form of upper bounds (see for instance \cite{Serre1981}, \cite{CojocaruFouvryMurty2005}, and \cite{MurtyMurtySaradha1988}). 

An approximation to this problem is to consider an average over a ``box'' of elliptic curves. Probably the first such computation can be found in the work of Fouvry and Murty \cite{FouvryMurty1996}, where they compute the average number of primes for which $a_{\ell}=0$ for the two-parameter family of elliptic curves $ \{ E_{(a,b)}:y^2=x^2+ax+b \}$. They obtain the following asymptotic formula (the dash indicates a sum over models of non-singular curves):
$$
\frac{1}{4AB}
\sum_{\substack{|a| \leq A \\ |b| \leq B}} {}^{'}
\# \{
\ell \texteqn{prime in $\Z$:} 1 \leq \ell \leq x, a_{\ell}\left(E_{(a,b)}\right) = 0
\}
\sim
\frac{\pi}{3} \frac{\sqrt{x}}{\log x},
$$
provided $A,B > x^{\frac{1}{2} + \e}$ and $AB > x^{\frac{3}{2} + \e}$ for some $\e > 0$. More generally, one has \cite{DavidPappalardi1999}, where David and Pappalardi fix an integer $a$ and compute the average number of primes for which $a_{\ell}=a$, obtaining
$$
\frac{1}{4AB}
\sum_{\substack{|a| \leq A \\ |b| \leq B}} {}^{'}
\# \{
\ell \texteqn{prime in $\Z$:} 1 \leq \ell \leq x, a_{\ell}\left(E_{(a,b)}\right) = a
\}
\sim
C_{a} \frac{\sqrt{x}}{\log x},
$$
with the constant $C_{a}$ given explicitly, provided $A,B > x^{1 + \e}$ for some $\e > 0$. Improvements to the result in \cite{DavidPappalardi1999} come in the form of smaller box sizes obtained through the use of character sum estimates (see for instance \cite{Baier2007} and \cite{BanksShparlinkski2009}). For instance, Baier \cite{Baier2007} reduces the conditions on $A$ and $B$ to $A,B > x^{\frac{1}{2} + \e}$ and $AB > x^{\frac{3}{2} + \e}$ for some $\e > 0$.

It is natural to ask for average results such as these for rank 2 Drinfeld modules. To this effect, David \cite{David1996} obtains an analogue of the result of Fouvry and Murty, counting supersingular reductions for rank 2 Drinfeld modules on average. More precisely, as $x$ runs through positive integers and $x \to \infty$, 

$$
\frac{1}{q^{\mc A + 1}q^{\mc B + 1}}
\sum_{\substack{\deg g \leq \mc A \\ \deg \gD \leq \mc B}} {}^{'}
\# \{
\ell \in A \texteqn{monic irreducible,} \deg \ell = x: a_{\ell}(E_{(a,b)}) = 0
\}
\sim
C(x,q) \frac{q^{x/2}}{x},
$$

with the constant $C(x,q)$ given explicitly in terms of $q$ and the parity of $x$, and provided $\mc A, \mc B \geq x$.

In this paper we prove a generalization of this last statement for more general $a \in A$. Our character sum estimates require us to avoid very small values of $q$, say $q \geq 17$. Our techniques follow those of \cite{DavidPappalardi1999} and \cite{Baier2007}, giving us lower bounds on $\mc A$ and $\mc B$ that improve as $q \to \infty$. 

\subsection{Statement of Results}

For $g, \gD \in A$, let $\Phi(g,\gD)$ be the Drinfeld module over $F$ determined by the homomorphism
$$
\Phi(g,\gD):A \to F\{\tau\},\hspace{16pt} T \mapsto T + g \tau + \gD \tau^2,
$$
and for a prime $p$ of good reduction, let $a_p(g, \gD)$ and $P_p(X,g,\gD)$ be (respectively) the trace and the characteristic polynomial of the Frobenius at $p$ for $\Phi(g,\gD)$.

Fix a positive integer $x$, and let $\mc P$ be the set of monic irreducible polynomials in $A$ of degree $x$. Let $\mc A, \mc B$ be positive integers; these will be constrained in terms of $x$. We will take an average over Drinfeld modules in a two-parameter family by taking
$$
\Box(\mc A, \mc B) := 
\{ 
(g, \gD) \in A \times A: \deg g < \mc A, \deg \gD < \mc B, \Phi(g, \gD) \text{  is a rank 2 Drinfeld module} 
\}.
$$

We are interested in the quantity
$$
S(x, \mc A,\mc B, a, u) :=
\frac{1}{\# \Box(\mc A , \mc B)}
\sum_{(g, \gD) \in \Box(\mc A, \mc B)}
\# \{ p \in \mc P: P_p(X,g,\gD) = X^2 -aX+up \}.
$$

Define $C_\infty$, which we view as the local factor at the prime at infinity, as
\begin{equation}\label{eqn:FactorAtInfinity}
C_{\infty} :=
\left\{\begin{array}{ll}
\frac{1}{q^{1/2}(q-1)} & \texteqn{if}x \texteqn{odd,}\\ 
\frac{1}{(q+1)(q-1)} & \texteqn{if}x \texteqn{even}. 
\end{array}\right.
\end{equation}

Our main theorem makes use of character sum estimates to allow for smaller values of $\mc A$ and $\mc B$ for large $q$. 

\begin{theorem} \label{thm:MainTheoremSmallBox}
Take the notation above, and assume $q \geq 17$ is fixed. Suppose that $\deg a < \frac{1}{2}x$ and that,

\begin{equation}\label{eqn:RestrictionsInMainTheorem}
\mc A, \mc B >  \frac{\log 4}{\log q}x + \log x \; \; \mathrm{and} \; \;
\mc A + \mc B > \left( \frac{1}{2} +  \frac{\log 16}{\log q} \right) x + \log x.
\end{equation}
Then, we have
$$
S(x, \mc A,\mc B, a, u)
= 
C_{\infty}
C(a)
\frac{q^{x/2}}{x}
+
E(x,q);
$$
where $C_{\infty}$ is defined in (\ref{eqn:FactorAtInfinity}), $C(a)$ is given by
$$
\prod_{\ell \mid a}
\left(
1 - \frac{1}{|\ell|^2}
\right) ^{-1}
\prod_{\ell \nmid a}
\left(
\frac{|\ell|(|\ell|^2 - |\ell|-1)}{(|\ell|^2-1)(|\ell|-1)}
\right),
$$
and $E(q,x)$ is $\ul o \left( \frac{q^{x/2}}{x} \right)$ as $x \to \infty$.
\end{theorem}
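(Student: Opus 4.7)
The plan is to interchange the order of summation, reducing the problem to counting, for each monic prime $p$ of degree $x$, the number $N(p;a,u)$ of pairs $(g,\gD)\in\Box(\mc A,\mc B)$ whose reduction modulo $p$ has characteristic polynomial of Frobenius equal to $X^2-aX+up$. Writing
$$
S(x,\mc A,\mc B,a,u) \;=\; \frac{1}{\#\Box(\mc A,\mc B)}\sum_{p\in\mc P} N(p;a,u),
$$
the evaluation of $N(p;a,u)$ splits naturally into two independent subproblems: (i) count the isomorphism classes of rank $2$ Drinfeld modules over $A/p$ whose Frobenius has the prescribed characteristic polynomial, and (ii) for each such class, count its lifts $(g,\gD)$ lying in the box.

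For (i) I would invoke the Drinfeld-module analogue of Deuring's theorem. The assumption $\deg a<x/2$ is the Hasse-type bound guaranteeing that $X^2-aX+up$ is indeed the characteristic polynomial of some rank $2$ Drinfeld module over $A/p$, and the weighted number of isomorphism classes realizing it is governed by a Hurwitz-Kronecker class number of the order $A[\pi]$, with $\pi$ a root of $X^2-aX+up$ (the function-field Eichler mass formula of Gekeler and Yu, already used in David's supersingular average). Expanding this class number as an Euler product over the primes $\ell\ne p$ and computing each local density of the matrix condition ``trace $a$, determinant $up$ modulo $\ell$'', case by case depending on whether $\ell\mid a$, produces the Euler product $C(a)$ displayed in the theorem. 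The factor $C_\infty$ emerges from the same mass analysis at the prime at infinity, with the parity of $x$ entering through whether $\sqrt{a^2-4up}$ is rational over $F_\infty$.

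For (ii) I would pass from residue classes modulo $p$ to genuine elements of $\Box(\mc A,\mc B)$. The expected count is $(\#\Box(\mc A,\mc B))/|p|^2$ times the class number from step (i); the error is bounded by expanding the box indicator in additive characters of $A/p$ and estimating the resulting incomplete sums via Polya-Vinogradov-type inequalities over $\F_q[T]$, as in Baier's elliptic-curve arguments transposed to the function-field setting. It is precisely here that the hypotheses $q\ge 17$ and the thresholds (\ref{eqn:RestrictionsInMainTheorem}) on $\mc A,\mc B$ and $\mc A+\mc B$ are forced: they guarantee that once the character-sum bounds are summed over $p\in\mc P$, the aggregate error remains $\ul o(q^{x/2}/x)$. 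Combining the two steps with the prime polynomial theorem $\#\mc P\sim q^x/x$ and absorbing the negligible contribution of primes of bad reduction then extracts the asserted asymptotic $C_\infty C(a)\,q^{x/2}/x$.

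The main obstacle I anticipate is step (ii). Pushing the box sizes all the way down to the logarithmic-in-$x$ thresholds of (\ref{eqn:RestrictionsInMainTheorem}) requires careful bookkeeping of the Polya-Vinogradov estimates, because the variables $g$ and $\gD$ play asymmetric roles in the Frobenius-polynomial condition modulo $p$: $g$ enters linearly while $\gD$ appears as a twist parameter, so the two-dimensional character sum factors unevenly and one must separate the ranges of $\deg p$ against $\mc A$ and $\mc B$ before applying the character-sum bounds to each factor. Checking that the numerical constants in the $\F_q[T]$-Polya-Vinogradov inequality are already effective at $q=17$, while still delivering a genuine $o$-saving across the stated thresholds, is the delicate technical core of the argument.
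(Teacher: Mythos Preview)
Your overall decomposition---swap the order of summation, invoke the Deuring--Gekeler formula to replace the inner count by a Hurwitz class number $H_p$, and control the lifting to the box via character sums---matches the paper's strategy. Two points, however, require correction.

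The genuine gap is in your step (i). You write that ``expanding this class number as an Euler product over the primes $\ell\ne p$ \ldots\ produces the Euler product $C(a)$''. This is not right: for a fixed $p$ one has, via the class-number formula, $H_p = (q-1)C_\infty \sum_{r^2\mid a^2-4up} |p|^{1/2}|r|^{-1} L(1,\chi_R)$ with $R=(a^2-4up)/r^2$, and the Euler factors of $L(1,\chi_R)$ depend on $p$ through the discriminant. The constant $C(a)$ only appears after \emph{averaging $H_p$ over $p\in\mc P$}: one expands each $L$-value as a Dirichlet series $\sum_v \chi_R(v)/|v|$, truncates the $r$-sum and the $v$-sum at heights $U,V$ chosen so that $2U+V<x$, and applies the effective prime number theorem for arithmetic progressions modulo $vr^2$ to the sum over $p$. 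The hypothesis $q\ge 17$ enters \emph{here}, not in step (ii): it guarantees $\frac{\log 4}{\log q}<\frac12$, so that the window $\frac{\log 4}{\log q}x<V<\frac12 x$ required by the truncation is nonempty. Only after this averaging does the resulting double sum over $r,v$ collapse, via a fairly lengthy multiplicative computation, to the displayed product $C(a)$. This occupies the bulk of the paper and is entirely absent from your sketch; contrary to your final paragraph, it---not step (ii)---is the main obstacle.

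For step (ii) you propose additive characters and P\'olya--Vinogradov to detect box membership. The paper does something different: it uses the explicit description of $\F_p$-isomorphism classes---$\phi(\hat g,\hat\gD)\cong\phi(\hat u_j,\hat v_j)$ iff $gu_j^{-1}$ is a $(q{-}1)$-th power modulo $p$ and $(gu_j^{-1})^{q+1}\equiv \gD v_j^{-1}$---and detects these two conditions with the $(q{-}1)$-th power residue symbol and multiplicative Dirichlet characters modulo $p$, following Baier. The nontrivial characters are bounded via the Riemann hypothesis for $L(s,\chi)$ together with Cauchy--Schwarz and an orthogonality identity; this multiplicative route is what yields the precise thresholds in (\ref{eqn:RestrictionsInMainTheorem}).
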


In the setting studied by \cite{DavidPappalardi1999}, there is no analogue of the unit appearing in the definition of $S(x, \mc A,\mc B, a, u)$; our result shows that, on average, there is uniform distribution over admissible $u \in \F^*$. The box size we obtain is comparable to \cite{Baier2007}, with the difference that, for $q$ large enough, the factors of $x$ appearing in the expressions in (\ref{eqn:RestrictionsInMainTheorem}) can be made arbitrarily close to $0$ and $\frac{1}{2}$ respectively.

\subsection{Notation}

Let $q$ be an odd prime power, and let $\F$ be the finite field with $q$ elements. Let $A:=\F[T]$, $\upone A$ the set of monic polynomials in $A$, and $F :=\F(T)$. For an element $a \in A$, let $\sgn{a}$ denote the leading coefficient of $a$ and $\deg{a}$ denote the degree of $a$ as a polynomial in $T$. Write $|a| = q^{\deg{a}}$.

The letters $p$ and $\ell$ hereafter denote monic irreducible elements of $A$, which we refer to as primes. If $p$ is a prime in $A$, let $\F_p$ be the $A$-field $A / pA$. If $a \in A$, then we write $\hat a$ for the element $a \mod p$ in $\F_p$.

Define $\varphi(\cdot)$ to be the Euler-phi function on $A$, i.e. $\varphi(a) := \#(A/aA)^*$. As in the classical case, $\varphi$ is multiplicative and has the product expansion
$$
\varphi(a)=|a|\prod_{p \mid a}\left(1 - \frac{1}{|p|}\right).
$$
From this expansion it is straightforward to deduce the identity
\begin{equation}\label{eqn:ProductPropertyPhi}
\varphi(vw) = \varphi(v) \varphi(w)\cdot\frac{\gcd(v,w)}{\varphi(\gcd(v,w))}. 
\end{equation}

Fix a positive integer $x$, and write $\mc P := \{ p \in \upone{A}: \deg p = x\}$.

\section{Background and Preliminary Lemmas}

\subsection{Drinfeld modules}

We refer the reader to \cite{Drinfeld1974}, \cite{Drinfeld1977}, \cite{Gekeler1991}, and \cite{Rosen2002} for more rigorous treatments of the theory of Drinfeld modules, including proofs.

For any $A$-field $K$, let $K \{ \tau \}$ be the non-commutative polynomial ring with the commutation rule $\tau \ga = \ga^q \tau $ for all $\ga \in K$. 

 For $(g,\gD) \in A \times A$ let $\Phi(g,\gD)$ be the Drinfeld module over $F$ determined by the homomorphism
$$
\Phi(g,\gD):A \to F\{\tau\},\hspace{16pt} T \mapsto T + g \tau + \gD \tau^2.
$$

If $\gD \neq 0$, this is a rank 2 Drinfeld module. For any prime $p \nmid \gD$, let

$$
P_p(X,g,\gD) = X^2 - a_p(g,\gD) X + u_p(g,\gD)p \in A[X]
$$

be the characteristic polynomial of the Frobenius endomorphism at $p$; recall from \cite{Gekeler1991} that this polynomial satisfies $\deg a_p(g, \gD) \leq \frac{1}{2}\deg p$ and $u_p(g, \gD)$ is a unit in $A$.

For $(\gamma, \gd) \in \F_p \times \F_p$ with , let $\phi(\gamma, \gd)$ be the Drinfeld module over $\F_p$ determined by 
$$
\phi(\gamma,\gd):A \to \F_p \{\tau\},\hspace{16pt} T \mapsto \hat{T} + \gamma \tau + \gd \tau^2.
$$

If $\gd \neq 0$, this is a rank two finite Drinfeld module over $\F_p$, and we write
$$
P(X,\gamma, \gd) = X^2 - a(\gamma, \gd) X + u(\gamma, \gd)p
$$
for the characteristic polynomial of the Frobenius endomorphism.

\subsection{Orders in imaginary quadratic fields}

We refer the reader to \cite{Yu1995}, \cite{Gekeler2008} for more details and proofs.

Let $E$ be a degree 2 extension of $F$; since we assume $q$ is odd, $E$ must be separable. We say $E$ is an imaginary quadratic extension of $F$ if the prime at infinity of $F$ does not split in $E$. An equivalent, more concrete characterization (see for instance \cite[Prop. 14.6, p. 248]{Rosen2002}) is given by writing $E = F(\sqrt{D})$ with $D$ squarefree. Then, $E$ is an imaginary quadratic extension if and only if $\deg D$ is odd, or $\deg D$ is even and $\sgn D$ is a nonsquare in $\F^*$. Whenever $D \in A$ satisfies the above, we say $D$ is a fundamental discriminant. If $d$ satisfies the above except possibly the squarefree condition, we simply say that $d$ is a discriminant.

Fix a fundamental discriminant $D$ and let $E = F(\sqrt{D})$. Then $\mc O_E := A\left[\sqrt D\right]$ is the ring of integers of $E$. Moreover, any $A$-subalgebra of $E$ with $A$-rank 2 is contained in $\mc O_E$; we cal these orders in $E$, with $\mc O_E$ being the maximal order in $E$. As in the classical setting, there is a one-to-one correspondence between discriminants (up to multiplication by a square in $\F^*$) and orders in imaginary quadratic extensions of $F$, given by $d \leftrightarrow A\left[\sqrt d\right]$. This correspondence sends fundamental discriminants to maximal orders.

For each fundamental discriminant $D$, we define the Dirichlet character $\chi_D$ on $A$ associated to $A[\sqrt D]$ by assigning values at primes via
$$
\chi_D(\ell):=
\left\{\begin{array}{ll}
0 & \texteqn{if}\ell \texteqn{is ramified in $F(\sqrt D)$,}\\ 
1 & \texteqn{if}\ell \texteqn{splits in $F(\sqrt D)$,}\\ 
-1 & \texteqn{if}\ell \texteqn{is inert in $F(\sqrt D)$,}
\end{array}\right.
$$
and extending completely multiplicatively to all of $A$. For each discriminant $d$, write $d = f^2D$ where $D$ is a fundamental discriminant and $f \in \upone A$. Then, we define the Dirichlet character $\chi_d$ on $A$ associated to $A[\sqrt d]$ by assigning values at primes via
$$
\chi_d(\ell):=
\left\{\begin{array}{ll}
0 & \texteqn{if}\ell \mid f,\\ 
\chi_D(\ell) & \texteqn{otherwise}
\end{array}\right.
$$
and extending completely multiplicatively to all of $A$. For the Dirichlet character described above, define the $L$-function associated to $\chi_d$ via 
$$
L(s, \chi_d) := \sum_{n \in \upone A} \frac{\chi_d(n)}{|n|^s} \texteqn{for} \Re(s) > 1.
$$

One has (see for instance \cite[Ch. 2]{Rosen2002}) that this function can be expressed as a polynomial in $q^{-s}$ of degree $\deg d$. Moreover, the Riemann Hypothesis for function fields implies that, replacing $q^{-s}$ with $X$, we can write the $L$-function of a Dirichlet character $\chi$ on $A$ whose conductor has degree $x$ as
\begin{equation} \label{eqn:LfunctionEulerProduct}
L(s, \chi) = \prod_{i=1}^{x} \left( 1 - \ga_i X \right)
\texteqn{with} \ga_i \in \C, |\ga_i| \leq q^{1/2}.
\end{equation}

We also define the class number of the order $A\left[\sqrt d\right]$. First we define a proper ideal $I \subset A\left[\sqrt d\right]$ as one that satisfies
$$
\{ \ga \in E: \ga I \subseteq I \} = A\left[\sqrt d\right].
$$

The proper ideals of $A\left[\sqrt d\right]$ have unique factorization into products of proper prime ideals, and we define $h(d)$, the ideal class number of $A\left[\sqrt d\right]$, as the number of proper $A\left[\sqrt d\right]$-ideals modulo principal ideals. From the theory of quadratic spaces (see \cite{Yu1995}), we get that, if $D$ is a fundamental discriminant and $f \in A$ the ratio of the class numbers $h(f^2D)$ and $h(D)$ is

\begin{equation}\label{eqn:ClassNumberRatio}
\frac{h(f^2D)}{h(D)} 
= 
\frac
{|f|}
{ 
  \left[ 
  A\left[\sqrt D\right]^*:A\left[\sqrt {f^2 D}\right]^*   
  \right]    
}
\prod_{\ell \mid f} 
\left(1 - \frac{\chi_D(\ell)}{|\ell|}  \right).
\end{equation}

For maximal orders, we have the analytic class formula from Artin's thesis (\cite{Artin1924}, see also \cite[Theorem 17.8A, p. 317]{Rosen2002}):

\begin{equation}\label{eqn:ClassNumberFormulaFundamental}
h(D)
=
\left\{\begin{array}{ll}
\frac{|D|^{1/2}}{q^{1/2}}L(1,\chi_D) & \texteqn{if}\deg D \texteqn{odd,}\\ 
\frac{2|D|^{1/2}}{q+1}L(1,\chi_D) & \texteqn{if}\deg D \texteqn{even and $\sgn D$ is a nonsquare in $\F^*$.} 
\end{array}\right.
\end{equation}

We combine (\ref{eqn:ClassNumberRatio}) and (\ref{eqn:ClassNumberFormulaFundamental}) in the following lemma for later reference.

\begin{lemma}\label{lem:ClassNumberFormulaGeneral}
Take the notation above and suppose that $d=f^2D$, where $D$ is a fundamental discriminant and $f$ is monic. Then,
$$
h(d) = 
\left\{\begin{array}{ll}
\frac{|d|^{1/2}}{q^{1/2}}L(1,\chi_d) & \texteqn{if}\deg d \texteqn{odd,}\\ 
\frac{2|d|^{1/2}}{q+1}L(1,\chi_d) & \texteqn{if}\deg d \texteqn{even and $\sgn d$ is a nonsquare in $\F^*$.} 
\end{array}\right.
$$
\end{lemma}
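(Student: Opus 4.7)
The plan is to combine (2.3), (2.4), and an Euler-factor comparison relating $L(1, \chi_d)$ and $L(1, \chi_D)$.

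From the definition of $\chi_d$ we have $\chi_d(\ell) = \chi_D(\ell)$ for $\ell \nmid f$ and $\chi_d(\ell) = 0$ for $\ell \mid f$, so a termwise comparison of Euler products gives
\[
L(s, \chi_d) = L(s, \chi_D) \prod_{\ell \mid f}\!\left(1 - \frac{\chi_D(\ell)}{|\ell|^s}\right),
\]
which at $s = 1$ specializes to
\[
L(1, \chi_d) = L(1, \chi_D) \prod_{\ell \mid f}\!\left(1 - \frac{\chi_D(\ell)}{|\ell|}\right).
\]
This is precisely the Euler product already appearing in (2.3). Substituting the analytic class number formula (2.4) for $h(D)$ into (2.3) and using $|f|\cdot|D|^{1/2} = |d|^{1/2}$, the product folds into $L(1, \chi_d)$ and we recover the stated formula---noting that $\deg d = 2\deg f + \deg D$ has the same parity as $\deg D$ and $\sgn d = \sgn D$ (since $f$ is monic), so the parity/sign branch of (2.4) applicable to $D$ is also the correct one for $d$.

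The only step requiring genuine verification is that the unit index $[A[\sqrt D]^* : A[\sqrt{f^2 D}]^*]$ in (2.3) equals $1$. For an imaginary quadratic order $A[\sqrt e]$ with $\deg e \geq 1$, any unit $u = a + b\sqrt e$ has $N(u) = a^2 - b^2 e \in \F^*$; if $b \neq 0$, then $\deg(b^2 e) = 2\deg b + \deg e$, and when $\deg e$ is odd this is odd and cannot be matched by $\deg a^2$, while when $\deg e$ is even, matching degrees forces $\sgn e = (\sgn a/\sgn b)^2$ to be a square, contradicting the imaginary-quadratic-discriminant hypothesis on $e$. Hence $b = 0$ and $A[\sqrt e]^* = \F^*$. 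Applying this to $e = D$ and $e = f^2 D$ (both of positive degree in the situation of interest), both unit groups coincide with $\F^*$ and the index is $1$; the remainder of the argument is then routine algebra. The mild subtlety in the degenerate range $\deg D = 0$ (where $A[\sqrt D] = \F_{q^2}[T]$ carries a larger unit group) is the one place a careful reader needs to pause, but that regime does not arise for the fundamental discriminants appearing in the main application.
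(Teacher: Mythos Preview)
Your proof is correct and follows exactly the route the paper indicates, namely combining (2.3) with (2.4); the paper offers no details beyond that one sentence, so your Euler-product comparison relating $L(1,\chi_d)$ to $L(1,\chi_D)$ and your norm argument showing the unit index is $1$ simply make explicit what the paper leaves to the reader. Your flag on the degenerate case $\deg D=0$ is apt---the paper does not address it either.
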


\subsection{Finite Drinfeld modules of rank 2}
In this section we record some lemmas regarding isomorphism classes of finite Drinfeld modules. Let $p$ be a prime in $A$ with $\deg p = x$. 

\begin{theorem}[{\cite[Prop. 6.8]{Gekeler2008}}]\label{thm:DeuringGekelerFormula}
Fix $a \in A$ with $\deg a < x/2$. Let $u \in \F^*$, and if $x$ is even then further assume that $-4u$ is a nonsquare in $\F^*$. The number of $\F_p$-isomorphism classes of rank 2 Drinfeld modules with characteristic polynomial of Frobenius equal to $X^2 - aX + up$ is the total number of ideal classes of the ring $A\left[\sqrt{a^2 - 4up}\right]$.
\end{theorem}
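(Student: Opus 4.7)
The strategy is a Drinfeld-module analogue of Deuring's classical theorem relating isomorphism classes of ordinary elliptic curves over a finite field with fixed trace of Frobenius to ideal classes of the associated imaginary quadratic order. I would proceed in four steps.

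\emph{Step 1: reduce to the ordinary, imaginary-quadratic setting.} The discriminant of $X^2-aX+up$ is $a^2-4up$. Since $\deg a<x/2$, we have $\deg a^2<x=\deg(4up)$, so $\deg(a^2-4up)=x$ and the leading coefficient is $-4u$. In particular $p\nmid(a^2-4up)$, which forces the reduction $\phi$ to be ordinary: the Frobenius $\pi_\phi$ is a separable root of $X^2-aX+up$, and $F(\pi_\phi)=F(\sqrt{a^2-4up})=:E$ is a quadratic extension of $F$. The degree/sign analysis together with the hypothesis (case $x$ even: $-4u$ is a nonsquare in $\F^*$) ensures $E$ is an \emph{imaginary} quadratic extension of $F$ in the sense of the paper. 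Moreover $A[\pi_\phi]\cong A[\sqrt{a^2-4up}]$ as $A$-orders in $E$.

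\emph{Step 2: identify endomorphism rings.} By the Drinfeld-module analogue of Tate's theorem, for ordinary $\phi$ the commutative ring $\End_{\F_p}(\phi)$ is an $A$-order $\mc O_\phi\subseteq\mc O_E$ containing $A[\pi_\phi]=A[\sqrt{a^2-4up}]$. Thus every isomorphism class in the required set comes with an order $\mc O_\phi$ wedged between $A[\sqrt{a^2-4up}]$ and $\mc O_E$, and the counting problem splits as
\[
N=\sum_{A[\sqrt{a^2-4up}]\subseteq \mc O\subseteq \mc O_E}
\#\{\phi/\F_p\text{ up to iso.} : P_p(X,\phi)=X^2-aX+up,\ \End(\phi)=\mc O\}.
\]

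\emph{Step 3: Deuring correspondence for each order.} The central input is the bijection, for each such $\mc O$, between isomorphism classes of rank~2 Drinfeld modules over $\F_p$ with $\End(\phi)=\mc O$ and $\Pic(\mc O)$, the group of proper $\mc O$-ideal classes. The usual proof goes via the analytic uniformisation: lift $\phi$ to characteristic zero, represent it by a rank~2 $A$-lattice in $\C_\infty$, and check that tensoring with a proper $\mc O$-ideal produces an isogenous Drinfeld module with the same endomorphism ring, yielding a simply transitive action of $\Pic(\mc O)$ on the isomorphism classes in the isogeny class. Surjectivity of this action uses that all Drinfeld modules in the ordinary isogeny class are linked by isogenies whose kernels are cyclic $\mc O$-modules. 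This step is where I expect the real work: lifting to characteristic zero and matching endomorphism structures is the delicate part, and it is what Gekeler actually carries out in \cite{Gekeler2008}.

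\emph{Step 4: assemble the count.} Summing over $\mc O$ as in Step~2 gives $N=\sum_{\mc O\supseteq A[\sqrt{a^2-4up}]}h(\mc O)$. Finally, the classical multiplier-ring bijection, which sends an (arbitrary) $A[\sqrt{a^2-4up}]$-ideal class $[I]$ to the pair $(\mc O_I,[I])$ with $\mc O_I=\{x\in E:xI\subseteq I\}$ and $[I]\in\Pic(\mc O_I)$, identifies this sum with the total number of (not necessarily proper) ideal classes of $A[\sqrt{a^2-4up}]$, completing the proof. The arithmetic identity (\ref{eqn:ClassNumberRatio}) can be read as the quantitative shadow of this decomposition over conductors $f$ with $f^2\mid(a^2-4up)$.
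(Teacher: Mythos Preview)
The paper does not supply its own proof of this statement: it is quoted verbatim from \cite[Prop.~6.8]{Gekeler2008} and used as a black box, so there is nothing in the paper to compare your argument against. Your four-step outline is the standard Deuring-type route and is, in broad strokes, the approach Gekeler takes in the cited reference.

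One genuine caveat in your Step~1: the claim that $p\nmid(a^{2}-4up)$ fails when $a=0$, since then $a^{2}-4up=-4up$ is divisible by $p$. This is exactly the supersingular locus, where $\End(\phi)$ is an order in a quaternion algebra rather than in $E$, so the commutative-order analysis of Steps~2--3 does not apply directly. The statement as written in the paper does not exclude $a=0$, and Gekeler handles the supersingular case by a separate argument; if you intend your sketch to cover the full range of the theorem you should flag this case and treat it on its own.
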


We denote this quantity as $H_{p}$, and observe that
\begin{equation}\label{eqn:GaussCNtoUsualCN}
H_{p} = 
\sum_{\substack{f \in \upone A \\ f^2 \mid a^2 - 4up}}
h\left( \frac{a^2 - 4up}{f^2} \right).
\end{equation}

\begin{lemma}[{\cite[Section 1]{Gekeler2008}}]\label{lem:DescribeIsoClassesofFiniteDM}
Let $\phi(\ga, \gb)$ and $\phi(\gamma, \gd)$ be rank 2 Drinfeld modules over $\F_p$ (i.e. $\gb \gd \neq 0$).
Then, $\phi(\ga, \gb)$ and $\phi(\gamma, \gd)$ are $\F_p$-isomorphic if and only if there exists $\mu \in \F_p^*$ such that
$$
(\ga, \gb) = (\mu^{q-1}\gamma, \mu^{q^2-1}\gd).
$$
Moreover,
$$
\# \{
(\ga, \gb): \phi(\ga, \gb) \equiv_{\F_p} \phi(\gamma, \gd)
\}
=
\left\{\begin{array}{ll}
\frac{|p|-1}{q^2-1} & \texteqn{if} \gamma = 0 \texteqn{and} \deg p \texteqn{even,}\\ 
\frac{|p|-1}{q-1} & \texteqn{otherwise}.\\ 
\end{array}\right.
$$
\end{lemma}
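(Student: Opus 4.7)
The plan is to reduce both parts of the statement to a direct computation inside the twisted polynomial ring $\F_p\{\tau\}$, using the relation $\tau a = a^q \tau$, and then a short orbit–stabilizer count.

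First I would recall that by definition two rank $2$ Drinfeld modules over $\F_p$ are $\F_p$-isomorphic precisely when there exists $\mu \in \F_p^*$ with $\mu \,\phi(\gamma,\gd)_T = \phi(\ga,\gb)_T\, \mu$ in $\F_p\{\tau\}$. Conjugating and using $\tau^k \mu^{-1} = \mu^{-q^k}\tau^k$, I compute
$$
\mu \,\phi(\gamma,\gd)_T\, \mu^{-1}
= \hat T + \gamma\mu^{1-q}\tau + \gd \mu^{1-q^2}\tau^2.
$$
Matching coefficients against $\phi(\ga,\gb)_T = \hat T + \ga\tau + \gb\tau^2$ and replacing $\mu$ by $\mu^{-1}$ gives $\ga = \mu^{q-1}\gamma$, $\gb = \mu^{q^2-1}\gd$, which is the first assertion.

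For the counting formula, I would view the set $\{(\ga,\gb)\in \F_p \times \F_p^*\}$ as acted on by $\F_p^*$ via $\mu \cdot (\gamma,\gd) = (\mu^{q-1}\gamma,\mu^{q^2-1}\gd)$. Then the size of the isomorphism class of $\phi(\gamma,\gd)$ equals the orbit size $(|p|-1)/|\mathrm{Stab}(\gamma,\gd)|$, so I just need to determine the stabilizer. The stabilizer consists of $\mu \in \F_p^*$ with $\mu^{q-1}\gamma = \gamma$ and $\mu^{q^2-1}\gd = \gd$; since $\gd \neq 0$ the second condition forces $\mu^{q^2-1}=1$ in every case.

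If $\gamma \neq 0$ I also need $\mu^{q-1}=1$, which subsumes $\mu^{q^2-1}=1$, so the stabilizer has order $\gcd(q-1,q^x-1)=q-1$, giving orbit size $(|p|-1)/(q-1)$. If $\gamma = 0$ only the condition $\mu^{q^2-1}=1$ survives, and the stabilizer has order $\gcd(q^2-1,q^x-1)=q^{\gcd(2,x)}-1$; this equals $q-1$ when $x$ is odd and $q^2-1$ when $x$ is even. Plugging these into the orbit formula yields the two cases of the displayed formula. The only nonroutine point is the identity $\gcd(q^m-1,q^n-1)=q^{\gcd(m,n)}-1$, which I would record as a quick lemma applied to $(m,n)=(2,x)$; beyond that the argument is a formal computation in $\F_p\{\tau\}$ followed by orbit–stabilizer.
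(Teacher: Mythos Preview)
Your argument is correct. The paper does not supply its own proof of this lemma; it merely cites \cite[Section 1]{Gekeler2008} for the statement, so there is nothing to compare against beyond noting that your computation in $\F_p\{\tau\}$ together with the orbit--stabilizer count is exactly the standard derivation one finds in that reference.
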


\begin{lemma}\label{lem:CongruenceConditionsforCharSums}
Let $a,b,c,d \in A$ with $p \nmid abcd$. Let $\Phi(a,b)$ and $\Phi(c,d)$ be rank 2 Drinfeld modules over $F$. Then, the following are equivalent:
\begin{itemize}
\item[1.] The finite Drinfeld modules over $\F_p$ given by $\phi(\hat a,\hat b)$ and $\phi(\hat c,\hat d)$ are $\F_p$-isomorphic.
\item[2.] $\hat c \hat a^{-1}$ is a perfect $(q-1)^{\text{th}}$ power in $\F_p^*$, and $(ca^{-1})^{(q+1)} \equiv db^{-1} \mod p$
\end{itemize}
\end{lemma}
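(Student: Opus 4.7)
The plan is to apply Lemma \ref{lem:DescribeIsoClassesofFiniteDM} directly to rewrite the isomorphism condition in multiplicative form, and then show that the two resulting equations on $\mu$ are equivalent to the two conditions in statement (2). The hypothesis $p \nmid abcd$ ensures that $\hat a, \hat b, \hat c, \hat d$ are all nonzero in $\F_p$, so multiplicative inverses modulo $p$ are well-defined throughout.

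For the implication $(1) \Rightarrow (2)$, I would invoke Lemma \ref{lem:DescribeIsoClassesofFiniteDM} to obtain $\mu \in \F_p^*$ with $\hat c = \mu^{q-1} \hat a$ and $\hat d = \mu^{q^2-1} \hat b$. The first equation immediately gives $\hat c \hat a^{-1} = \mu^{q-1}$, exhibiting $\hat c \hat a^{-1}$ as a $(q-1)$-th power. Then
$$
(\hat c \hat a^{-1})^{q+1} = \mu^{(q-1)(q+1)} = \mu^{q^2-1} = \hat d \hat b^{-1},
$$
which is the second condition in (2).

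For the converse $(2) \Rightarrow (1)$, suppose $\hat c \hat a^{-1} = \nu^{q-1}$ for some $\nu \in \F_p^*$ and $(\hat c \hat a^{-1})^{q+1} = \hat d \hat b^{-1}$. Set $\mu = \nu$. Then $\mu^{q-1} \hat a = \nu^{q-1} \hat a = (\hat c \hat a^{-1}) \hat a = \hat c$, and
$$
\mu^{q^2-1} \hat b = \nu^{(q-1)(q+1)} \hat b = (\hat c \hat a^{-1})^{q+1} \hat b = (\hat d \hat b^{-1}) \hat b = \hat d.
$$
Hence the pair $(\hat c, \hat d)$ equals $(\mu^{q-1}\hat a, \mu^{q^2-1}\hat b)$, and Lemma \ref{lem:DescribeIsoClassesofFiniteDM} yields $\phi(\hat a, \hat b) \cong_{\F_p} \phi(\hat c, \hat d)$.

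The argument is essentially a one-step algebraic reformulation of the previous lemma, so there is no real obstacle; the only care needed is keeping track of which side plays the role of $\mu$ and verifying that the exponent identity $q^2 - 1 = (q-1)(q+1)$ matches the second congruence in (2).
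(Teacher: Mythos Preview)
Your proposal is correct and matches the paper's approach: the paper's proof is simply the one-line citation ``This follows directly from \cite[\S 1.4]{Gekeler2008},'' and since Lemma \ref{lem:DescribeIsoClassesofFiniteDM} is itself the content of that section, your explicit unpacking of the algebra is exactly what that citation amounts to.
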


\begin{proof}
This follows directly from \cite[\S 1.4]{Gekeler2008}
\end{proof}

\begin{lemma}\label{lem:CountUnusualIsomorphismClasses}
The number of $\F_p$-isomorphism classes of rank 2 Drinfeld modules over $\F_p$ containing a representative of the form $\phi(0,\gd)$ for some $\gd \in \F_p$ is $O(q^2)$.
\end{lemma}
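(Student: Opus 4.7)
The plan is to apply Lemma \ref{lem:DescribeIsoClassesofFiniteDM} directly and essentially turn the statement into a short orbit-counting calculation. First I would observe that the set of pairs $(0, \gd)$ with $\gd \in \F_p^*$ is a union of $\F_p$-isomorphism classes: if $\phi(\ga, \gb) \cong_{\F_p} \phi(0, \gd)$, then the criterion of Lemma \ref{lem:DescribeIsoClassesofFiniteDM} furnishes $\mu \in \F_p^*$ with $\ga = \mu^{q-1} \cdot 0 = 0$, so every representative of a class containing some $\phi(0, \gd)$ has first coordinate $0$. In particular, no class of the type being counted can ``escape'' into pairs with $\ga \neq 0$.

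Next, I would determine the sizes of these classes and use them to count the classes. The total number of pairs $(0, \gd')$ with $\gd' \in \F_p^*$ is $|p| - 1 = q^x - 1$. The second part of Lemma \ref{lem:DescribeIsoClassesofFiniteDM} (with $\gamma = 0$) tells us that each such class has size $(|p|-1)/(q^2-1)$ when $x$ is even, and $(|p|-1)/(q-1)$ when $x$ is odd. Since the classes partition the set of valid pairs, dividing gives exactly $q^2 - 1$ classes in the even case and $q - 1$ classes in the odd case; both are $O(q^2)$, which establishes the bound.

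There is no real obstacle here — the statement is essentially a book-keeping consequence of the explicit orbit description in Lemma \ref{lem:DescribeIsoClassesofFiniteDM}. The only thing to be careful about is the first step (confirming that orbits containing a $(0,\gd)$ never pick up representatives with nonzero first coordinate), since otherwise one would have to worry about classes that contain a pair of the target form alongside other ``generic'' pairs.
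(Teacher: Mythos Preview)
Your argument is correct and follows essentially the same route as the paper: both are orbit-counting arguments based on the isomorphism description in Lemma~\ref{lem:DescribeIsoClassesofFiniteDM}. The paper phrases it as a bound by $\#\bigl(\F_p^*/(\F_p^*)^{q^2-1}\bigr)$, while you extract the exact class sizes from the second part of the lemma and divide; the resulting counts ($q^2-1$ for $x$ even, $q-1$ for $x$ odd) agree and give the same $O(q^2)$ bound.
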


\begin{proof}
The pairs $(0, \gd)$ can be parametrized by $\F_p^*$, and multiplication by a $(q^2-1)^{\text{th}}$ power gives $\F_p$-isomorphic Drinfeld modules. Therefore our quantity is bounded by $$ \# \left( \F_p^* / (\F_p^*)^{q^2-1}   \right) = q^2 - 1.$$
\end{proof}

\subsection{Character sum estimates for Dirichlet characters over $A$}

The first lemma is a straightforward consequence of the Riemann Hypothesis for function fields. We record it, with proof, for the sake of completeness. In particular, we later make use of the explicit dependence of the bound on the degree of the conductor of the character.

\begin{lemma}\label{lem:CharSumOnRH}
Let $p \in \upone A$ be a prime with $\deg p = x$. Fix natural numbers $z' \leq z \leq x$. Fix a nonprincipal Dirichlet character $\chi$ modulo $p$. Then we have the upper bound
$$
\left|
\sum_{\substack{f \in \upone A \\ z' \leq \deg f \leq z}}
     \chi(f)
\right|
\leq
q^{z/2} 2^x.
$$
\end{lemma}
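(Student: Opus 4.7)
The plan is to read off the character sum of monic polynomials of fixed degree as a coefficient of the $L$-function $L(s,\chi)$, and then control those coefficients using the factorization provided by the Riemann Hypothesis in (\ref{eqn:LfunctionEulerProduct}).

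First, I would expand the Dirichlet series definition of $L(s,\chi)$ by grouping monic polynomials by degree. Setting $X = q^{-s}$, this gives the power series identity
$$
L(s,\chi) = \sum_{k \geq 0} S_k(\chi) X^k, \qquad
S_k(\chi) := \sum_{\substack{f \in \upone A \\ \deg f = k}} \chi(f).
$$
Since $\chi$ is a nonprincipal character modulo a prime $p$ of degree $x$, its conductor has degree at most $x$, so by (\ref{eqn:LfunctionEulerProduct}) we can write
$$
L(s,\chi) = \prod_{i=1}^{x} (1 - \ga_i X), \qquad |\ga_i| \leq q^{1/2}.
$$

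Next, I would match coefficients on both sides. Expanding the product gives
$$
S_k(\chi) = (-1)^k e_k(\ga_1, \ldots, \ga_x)
$$
for $0 \leq k \leq x$ and $S_k(\chi)=0$ for $k>x$, where $e_k$ denotes the $k$-th elementary symmetric polynomial. Applying the bound $|\ga_i| \leq q^{1/2}$ to each of the $\binom{x}{k}$ terms of $e_k$ yields
$$
|S_k(\chi)| \leq \binom{x}{k} q^{k/2}.
$$

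Finally, I would sum over the range $z' \leq k \leq z$ and bound trivially:
$$
\Bigl| \sum_{\substack{f \in \upone A \\ z' \leq \deg f \leq z}} \chi(f) \Bigr|
\leq \sum_{k=z'}^{z} |S_k(\chi)|
\leq q^{z/2} \sum_{k=z'}^{z} \binom{x}{k}
\leq q^{z/2} \cdot 2^x,
$$
using $q^{k/2} \leq q^{z/2}$ and $\sum_{k=0}^{x} \binom{x}{k} = 2^x$. There is no real obstacle here; the only conceptual input is the Riemann Hypothesis for function fields, which is already packaged in (\ref{eqn:LfunctionEulerProduct}). The factor $2^x$ is the price paid for making the bound uniform in the endpoints $z'$ and $z$ without optimizing the binomial coefficients, and this is precisely the form needed later in the paper.
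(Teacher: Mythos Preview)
Your proposal is correct and follows essentially the same approach as the paper's own proof: identify the degree-$k$ character sum as the coefficient of $X^k$ in $L(s,\chi)$, read it off as an elementary symmetric function of the inverse roots $\alpha_i$, bound it by $\binom{x}{k}q^{k/2}$, and then sum over $z'\le k\le z$ and complete the binomial sum to $2^x$. The only differences are notational.
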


\begin{proof}
We write the $L$-function $L(s,\chi)$ associated to $\chi$ in two different ways and compare coefficients. By definition, for $\Re s > 1$ we have
$$
L(s,\chi) = \sum_{f \in \upone A} \chi(f) |f|^{-s}.
$$
Under the change of variables $X \leftrightarrow q^{-s}$, and writing $$c(k) := \sum_{f \in \upone A, \deg f =k } \chi(f),$$ this becomes
$$
L(s,\chi) = \sum_{k=1}^x c(k)X^k.
$$
Examining the product expansion given in (\ref{eqn:LfunctionEulerProduct}), we see that $c(k)$ is the $k^{\text{th}}$ elementary symmetric polynomial in $x$ letters evaluated at $\{ -\ga_i \}_{i=1}^x$. Ignoring cancellation among summands in this presentation of $c(k)$, we get the upper bound
$$
|c(k)| \leq \binom{x}{k} q^{k/2},
$$
which immediately yields the bound
$$
\left|
\sum_{\substack{f \in \upone A \\ z' \leq \deg f \leq z}}
     \chi(f)
\right|
\leq
q^{z/2}
\sum_{k=z'}^z \binom{x}{k}.
$$

The lemma follows by completing the sum of binomial coefficients to range from 0 to $x$.
\end{proof}

\begin{lemma}\label{lem:OrthogonalityCharSumSquare}
Let $p \in \upone A$ be a prime. Fix a natural number $z$.
Let $\{a_n\}$ be a sequence of complex numbers supported on $\{n \in \upone A: \deg n \leq z \}$. We have the bound
$$
\sum_{\chi(\mod p)}
\left |
\sum_{\substack{n \in \upone A \\ \deg n \leq z }}
a_n \chi(n)
\right| ^2
=
\varphi(p)
\sum_{\deg f < \deg p}
\left |
\sum_{\substack{n \in \upone A \\ \deg n \leq z \\ n \equiv f \mod p }}
a_n
\right| ^2,
$$
where the sum is taken of all Dirichlet characters mod $p$.
\end{lemma}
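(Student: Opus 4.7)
The plan is to expand the square on the left-hand side, interchange the order of summation so that the character sum is innermost, and then invoke the standard orthogonality relation for Dirichlet characters modulo $p$. This is a textbook argument; the only subtlety is bookkeeping of the residue class $f \equiv 0 \pmod p$.

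First I would write $|z|^2 = z \overline z$ and expand to obtain
$$
\sum_{\chi \bmod p} \left| \sum_{n} a_n \chi(n) \right|^2
= \sum_{n_1} \sum_{n_2} a_{n_1} \overline{a_{n_2}} \sum_{\chi \bmod p} \chi(n_1) \overline{\chi(n_2)},
$$
where $n_1, n_2$ range over monic polynomials in $A$ of degree at most $z$. Next I would apply the orthogonality relation: since characters mod $p$ are extended by $0$ on multiples of $p$, one has
$$
\sum_{\chi \bmod p} \chi(n_1) \overline{\chi(n_2)}
= \begin{cases} \varphi(p) & \text{if } n_1 \equiv n_2 \bmod p \text{ and } \gcd(n_1 n_2, p) = 1, \\ 0 & \text{otherwise.} \end{cases}
$$
Grouping the surviving pairs by their common residue $f \bmod p$ then yields
$$
\varphi(p) \sum_{\substack{\deg f < \deg p \\ \gcd(f,p) = 1}} \left| \sum_{\substack{n \in \upone A \\ \deg n \leq z \\ n \equiv f \bmod p}} a_n \right|^2.
$$

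Finally, to reconcile this with the formula in the statement, note that for the residue class $f \equiv 0 \bmod p$ the inner sum on the right-hand side involves only monic $n$ with $p \mid n$; these contribute $0$ to the left-hand side anyway (because $\chi(n) = 0$ there), so adjoining the $f \equiv 0$ term to the outer sum does not change the identity. I expect no real obstacle: the entire argument is a mechanical application of orthogonality, and the only point requiring care is this handling of multiples of $p$ so that the range $\deg f < \deg p$ matches a complete set of residues modulo $p$ as written.
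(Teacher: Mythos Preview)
Your proposal is correct and follows exactly the approach the paper takes: the paper's proof is the one-line instruction ``expand the square on the left hand side and use orthogonality of characters,'' citing \cite[Lemma 5]{Baier2007}. Your write-up is simply a fleshed-out version of that sketch, including the bookkeeping for the $f\equiv 0\pmod p$ class that the paper leaves implicit.
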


\begin{proof}
Expand the square on the left hand side and use orthogonality of characters. This is essentially the same as \cite[Lemma 5]{Baier2007}.
\end{proof}

\section{From Drinfeld modules to a sum of class numbers}
Our first task is to remove the dependence on the box size from the calculation; our approach follows \cite{Baier2007} and uses character sums to simultaneously treat congruence conditions and isomorphism conditions. 

From here on, we fix $a \in A$ and $u \in \F^*$. We assume that $x > 2 \deg a$; the case of equality poses additional complications that would require further study to address. If $x$ is even, then we further assume that $-4u$ is a nonsquare in $\F^*$. We define $I_{p}$ to be the number of $\F_p$ isomorphism classes  $[\phi]$ of rank 2 Drinfeld modules over $\F_p$ such that:
\begin{itemize}
\item[1.] For any representative $\phi(\gamma, \gd)$ of $[\phi]$, $P(\gamma, \gd) = X^2 - aX +up$; and
\item[2.] There does not exist a representative of $[\phi]$ of the form $\phi(0,\gd)$.
\end{itemize}

By Lemma \ref{lem:CountUnusualIsomorphismClasses}, we have $H_{p} - I_{p} = O(q^2)$.

\subsection{Application of character sums estimates}\label{subs:SmallBoxes}
We change the order of summation to obtain
\begin{align*}
S(x, \mc A, \mc B, a, u)
=
\frac{1}{\# \Box(\mc A, \mc B)}
\sum_{p \in \mc P}
  \# \{
    (g, \gD) \in \Box(\mc A, \mc B): p \nmid \gD, P_p(X,g,\gD) = X^2 -aX + up
  \} \\
=
\frac{1}{\# \Box(\mc A, \mc B)}
\sum_{p \in \mc P}
  \# \{
    (g, \gD) \in \Box(\mc A, \mc B): p \nmid g \gD, P_p(X,g,\gD) = X^2 -aX + up
  \} 
  + O \left( \frac{q^{\mc A}}{|p|} \right).
\end{align*}

Let $(u_{p,j}, v_{p,j})$ for $j = 1 , .. , I_{p}$ be pairs of monic elements of $A$ such that 
$$
\left\{
\phi(\hat u_{p,j}, \hat v_{p,j})
\right\}_{j=1}^{I_{p}}
$$
forms a system of representatives of $\F_p$-isomorphism classes of rank 2 Drinfeld modules over $\F_p$ with $P(X,\hat u_{p,j}, \hat v_{p,j})= X^2 -aX + up$ and $p \nmid u_{p,j}$. With this notation the expression inside of this last sum becomes
\begin{equation} \label{eqn:InnerTermBeforeCharacters}
\sum_{j=1}^{I_{p,a}}
  \# \{
    (g, \gD) \in \Box(\mc A, \mc B): \phi(\hat g, \hat \gD) \cong_{\F_p} \phi(\hat u_{p,j}, \hat v_{p,j})
    \} 
\end{equation}

By Lemma \ref{lem:CongruenceConditionsforCharSums}, $(g, \gD)$ satisfies the condition in the summand of (\ref{eqn:InnerTermBeforeCharacters}) if and only if:
\begin{itemize}
\item[1.] $g u_{p,j}^{-1}$ is a perfect $(q-1)^{\text{th}}$ power modulo $p$, and
\item[2.] $(g u_{p,j}^{-1})^{q+1} \equiv \gD v_{p,j}^{-1} \pmod p$.
\end{itemize}

To detect the first condition, we use the $(q-1)^{\text{th}}$ power residue symbol $\legendre{\cdot}{\cdot}_{(q-1)}$. It has properties similar to those of power residue symbols in the classical setting; see \cite[Chapter 3]{Rosen2002} for details. In particular, we have
$$
\sum_{k=0}^{q-2}
\legendre{gu_{p,j}^{-1}}{p}^k_{(q-1)}
=
\left\{\begin{array}{ll}
q-1 & \texteqn{if $g u_{p,j}^{-1}$ is a perfect $(q-1)^{\text{th}}$ power modulo $p$,}\\ 
0 & \texteqn{otherwise.}
\end{array}\right.
$$

For the second condition, orthogonality of Dirichlet characters implies that
$$
\sum_{\chi (\mod p)}
\chi( g^{q+1} u_{p,j}^{-(q+1)} \gD^{-1} v_{p,j})
=
\left\{\begin{array}{ll}
\varphi(p) & \texteqn{if}(g u_{p,j}^{-1})^{q+1} \equiv \gD v_{p,j}^{-1} \pmod p,\\ 
0 & \texteqn{otherwise.}
\end{array}\right.
$$

We now write (\ref{eqn:InnerTermBeforeCharacters}) as
$$
\frac{1}{(q-1)\varphi(p)} 
\sum_{j=1}^{I_{p}} \sum_{(g, \gD) \in \Box(\mc A, \mc B)}
  \left(
    \sum_{k=0}^{q-2}   \legendre{gu_{p,j}^{-1}}{p}^k_{(q-1)}
  \right)
  \left(
    \sum_{\chi (\mod p)}  \chi( g^{q+1} u_{p,j}^{-(q+1)} \gD^{-1} v_{p,j})
  \right).
$$

For $\chi$ a Dirichlet character mod $p$, and $0 \leq k \leq q-2$, define
\begin{align*}
&G_1(\chi,k) 
:= 
\sum_{j=1}^{I_{p}}
  \legendre{u_{p,j}^{-1}}{p}^k_{(q-1)}
  \chi^{q+1}(u_{p,j}^{-1}) \chi(v_{p,j}), \\
&G_2(\chi,k)
:=
\sum_{\substack{g \in \upone A \\ \deg g < \mc A}}
  \legendre{g}{p}^k_{(q-1)}
  \chi^{q+1}(g), \texteqn{and} \\
&G_3(\chi,k)
:=
\sum_{\substack{\gD \in \upone A \\ \deg \gD < \mc B}}
  \chi(\gD).
\end{align*}

In this notation, (\ref{eqn:InnerTermBeforeCharacters}) becomes
\begin{equation}\label{eqn:RewriteWithCharacterSums}
\frac{1}{(q-1)\varphi(p)} 
\sum_{k=0}^{q-2} \sum_{\chi (\mod p)} 
G_1(\chi,k) G_2(\chi,k) G_3(\chi,k). 
\end{equation}

In (\ref{eqn:RewriteWithCharacterSums}), there is exactly one totally trivial term, namely $(\chi,k) = (\chi_0,0)$. The contribution of this term is
$$
\frac{I_{p}q^{\mc A + \mc B}}{(q-1) \varphi(p)}
=
\frac{H_{p}q^{\mc A + \mc B}}{(q-1) |p|} 
\left( 1 + O \left(\frac{1}{|p|} \right) \right)
+
O \left(\frac{q^{2+\mc A + \mc B}}{(q-1) \varphi(p)} \right).
$$

There are two types of semi-trivial terms in (\ref{eqn:RewriteWithCharacterSums}). Suppose first that $\chi \neq \chi_0$ and $\legendre{\cdot}{\cdot}^k_{(q-1)}\chi^{q+1} = \chi_0$. This means that $\chi^{q+1} = \legendre{\cdot}{\cdot}^{-k}_{(q-1)}$, and there are at most $q+1$ choices of $\chi$ satisfying this relation. Altogether, there are at most $O(q^2)$ such terms overall. For each of these terms, we estimate $G_1$ and $G_2$ trivially, and bound $G_3$ using Lemma \ref{lem:CharSumOnRH}. The total contribution of these terms is 
$$
O \left(
\frac{I_{p} 2^x q^{2+\mc A + (\mc B / 2)}}{(q-1)|p|} 
\right).
$$

Now suppose that $\chi = \chi_0$ and $k \neq 0$, which occurs for $O(q)$ terms. Here we estimate $G_1$ and $G_3$ trivially, and bound $G_2$ using Lemma \ref{lem:CharSumOnRH}. The total contribution of these terms is
$$
O \left(
\frac{I_{p} 2^x q^{1+(\mc A /2)+ \mc B}}{(q-1)|p|} 
\right).
$$

For the remaining terms in (\ref{eqn:RewriteWithCharacterSums}), we apply the Cauchy-Schwarz inequality to obtain
\begin{equation}\label{eqn:AfterCauchySchwartz}
\begin{aligned}
\frac{1}{(q-1)\varphi(p)} 
&\sum_{k=1}^{q-2} 
\left(
  \sum_{
    \substack{
      \chi (\mod p), \chi \neq \chi_0 
      \\ 
      \chi^{q+1} \neq \legendre{\cdot}{\cdot}^{-k}_{(q-1)}
    }
  } 
G_1(\chi,k) G_2(\chi,k) G_3(\chi,k)
\right) \ll  \\
&\sum_{k=1}^{q-2} 
  \left(
    \sum_{\chi(\mod p)} 
    \left|
      \sum_{j=1}^{I_{p}}
        \legendre{u_{p,j}^{-1}}{p}^k_{(q-1)}
        \chi(u_{p,j}^{-(q+1)}v_{p,j})
    \right|^2
  \right)^{1/2} \times \\
  &\left(
    \sum_{\chi (\mod p), \chi \neq \chi_0}
    q
    \left|
     \sum_{\substack{g \in \upone A \\ \deg g < \mc A}} 
       \chi(g)
    \right|^4
  \right)^{1/4}
  \left(
    \sum_{\chi (\mod p), \chi \neq \chi_0}
        \left|
          \sum_{\substack{\gD \in \upone A \\ \deg \gD < \mc B}} 
            \chi(\gD)
        \right|^4
  \right)^{1/4}
\end{aligned}
\end{equation}

The $q$ in the second factor on the right hand side of (\ref{eqn:AfterCauchySchwartz}) appears since, for fixed $\chi_1$, there are $O(q)$ choices of $\chi$ satisfying $\chi^{q+1} = \chi_1$. The second and third factors on the right hand side of (\ref{eqn:AfterCauchySchwartz}) are bounded using Lemma \ref{lem:CharSumOnRH}. For the first factor on the right hand side of (\ref{eqn:AfterCauchySchwartz}), we apply Lemma \ref{lem:OrthogonalityCharSumSquare}, noting that
$$
\# \{ j: u_{p,j}^{-(q+1)}v_{p,j} \equiv f \pmod p \}
\leq q
$$
for all $f \mod p$, and is nonzero for at most $I_{p}$ choices of $f \mod p$. Altogether, the contribution from these terms is bounded by
$$
O \left(
\frac{|p|^{1/2} I_{p}^{1/2}4^x q^{(3 + \mc A + \mc B) / 2}}{(q-1) |p|}
\right).
$$

Combining the above estimates, we have
$$
S(x, \mc A, \mc B, a,u) =
\frac{q^{\mc A + \mc B}}{\#\Box(\mc A, \mc B)} \frac{1}{(q-1)q^x}
\sum_{p \in \mc P} H_{p}
+
E,
$$
where
$$
E
=
O \left(
    \frac{1}{x q^{\mc B}}
    + 
    \frac{1}{q^{2x+1}}
    \sum_{p \in \mc P} H_{p}
    +
    \frac{2^x}{q^{x+1}}
    \left(
    \frac{q}{q^{\mc A / 2}} + \frac{q^2}{q^{\mc B / 2}}
    \right)
    \sum_{p \in \mc P} H_{p}
    +
    \frac{q^{1/2}}{q^{x/2}}
    \frac{4^x}{q^{(\mc A + \mc B)/2}}
    \sum_{p \in \mc P} (H_{p})^{1/2}.
\right)
$$
From the analytic class formula and a trivial bound on $L(1, \chi)$, we can write
$$
H_{p} \ll x^2q^{x/2} \texteqn{and} (H_{p})^{1/2} \ll xq^{x/4};
$$
this allows us to rewrite $E$ as
$$
E 
=
O \left(
  1 
  +
  \frac{q^{x/2}}{(q-1)x}
  \left(
    \frac{x^2}{q^{2x+1}}
    +
    x^2 2^x 
    \left(
      \frac{q}{q^{\mc A / 2}} + \frac{q^2}{q^{\mc B / 2}} 
    \right)
    +
    \frac{4^x q^{3/2 + x/4}}{q^{\mc A / 2+\mc B / 2}}
  \right)
\right)
$$

In order for the main term to dominate $E$ as $x \to \infty$, we require that $\mc A$ and $\mc B$ satisfy (\ref{eqn:RestrictionsInMainTheorem}).

\section{Application of the prime number theroem for arithmetic progressions}

We now focus on the quantity
\begin{equation}\label{eqn:SumofClassNumbers}
\frac{1}{(q-1)q^x} \sum_{p \in \mc P} H_{p},
\end{equation}

upon observing that
$$
\frac{q^{\mc A + \mc B}}{\#\Box(\mc A, \mc B)} = 1+\ul o (1) \texteqn{as} q^x \to \infty.
$$

Combining the analogue of Deuring's lemma (Theorem \ref{thm:DeuringGekelerFormula}) with the class number formula (Lemma \ref{lem:ClassNumberFormulaGeneral}), and taking $C_\infty$ as defined in (\ref{eqn:FactorAtInfinity}), we get
$$
H_{p} 
= 
(q-1) C_\infty 
\sum_{\substack{r \in \upone A \\ r^2 | a^2 - 4up}}
\frac{|p|^{1/2}}{|r|} L(1,\chi_R),
$$
where $R := R(r,u,a,p) = \frac{a^2 - 4up}{r^2}$. Altogether we rewrite (\ref{eqn:SumofClassNumbers}) as
\begin{equation}\label{eqn:SumofLValues}
\frac{C_\infty}{q^{x/2}}
\sum_{p \in \mc P}
\sum_{\substack{r \in \upone A \\ r^2 | a^2 - 4up}}
\frac{L(1,\chi_R)}{|r|}.
\end{equation}

We recall an effective version of the prime number theorem for arithmetic progressions. For a proof, see \cite[p.40-42]{Rosen2002}, while keeping track of the dependence of the $O$-constant on $m$.

\begin{theorem}\label{thm:Effective PNTAP}
Suppose $m,a \in A$ with $(m,a) = 1$. Then
$$
\# \left\{ p, \deg p = x: p \equiv a \mod m \right\}
=
\frac{1}{\varphi(m)}
\frac{q^x}{x}
+
O \left(
  \frac{(\deg m)^2}{x} q^{x/2}
\right).
$$
\end{theorem}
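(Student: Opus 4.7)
The plan is the standard Dirichlet-character argument, with the Riemann Hypothesis for function fields (as already invoked in (\ref{eqn:LfunctionEulerProduct})) supplying the only non-elementary input. By orthogonality of Dirichlet characters mod $m$,
$$
\# \{ p : \deg p = x, \; p \equiv a \mod m\}
=
\frac{1}{\varphi(m)}
\sum_{\chi \mod m} \overline{\chi(a)} \sum_{\substack{p \in \upone A \\ \deg p = x}} \chi(p).
$$
It is cleaner to first study the von Mangoldt-weighted sum $\psi(x;\chi) := \sum_{\deg n = x} \Lambda(n)\chi(n)$ and only pass to the prime-counting sum at the end, since the prime-power contributions (from $n = p^k$ with $k \geq 2$ and $k \deg p = x$) collectively contribute $O(q^{x/2})$, dominated by the $k=2$ term with a harmless factor from counting divisors of $x$.

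For the principal character $\chi_0$, the identity $\sum_{\deg n = x}\Lambda(n) = q^x$ together with a trivial bound removing prime powers of primes dividing $m$ gives $\psi(x;\chi_0) = q^x + O(\deg m)$, which produces the main term $q^x/\varphi(m)$. For a non-principal $\chi$, I would use the factorization from (\ref{eqn:LfunctionEulerProduct}): writing $L(s,\chi) = \prod_{i=1}^{N}(1 - \alpha_i q^{-s})$ with $N \leq \deg m$ and $|\alpha_i| \leq q^{1/2}$, the logarithmic-derivative identity gives, upon comparing coefficients of $q^{-sx}$, the explicit formula $\psi(x;\chi) = -\sum_i \alpha_i^x$, whence
$$
|\psi(x;\chi)| \leq N \, q^{x/2} \leq (\deg m)\, q^{x/2}.
$$

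Summing the non-principal contribution over the $\varphi(m)-1$ non-principal characters and dividing by $\varphi(m)$ gives a total error of $O((\deg m)\, q^{x/2})$ in $\psi(x;m,a)$; then dividing by $x$ to pass from $\psi$ to $\pi$ and absorbing the prime-power contribution $O(q^{x/2}/x)$ yields the claimed estimate (in fact with the tighter dependence $\deg m$ in place of $(\deg m)^2$, which is more than enough for the application). The only genuine obstacle is the bookkeeping for imprimitive characters: verifying $\deg L(s,\chi) \leq \deg m$ in that case requires inducing from a primitive character modulo some $m^* \mid m$ and separately accounting for the finitely many missing Euler factors, but once that is done everything else reduces to orthogonality, geometric-series manipulations, and standard estimates.
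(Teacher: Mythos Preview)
Your argument is correct and is exactly the standard proof; it is, in fact, the argument given in Rosen \cite[pp.~40--42]{Rosen2002}, which is all the paper does here---the paper does not supply its own proof but simply cites Rosen with the instruction to keep track of the dependence of the implied constant on $m$. Your remark that the method actually yields the sharper dependence $\deg m$ rather than $(\deg m)^2$ is also correct, and either form suffices for the applications in Sections~4 and~5.
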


\subsection{Sum truncations}
We follow the strategy of \cite{DavidPappalardi1999}, that is, upon expanding the $L$-value we get a triple sum, truncate in two directions, and control the third direction using Theorem \ref{thm:Effective PNTAP}.

Expressing the $L$-value as a sum, we rewrite (\ref{eqn:SumofLValues}) as
\begin{equation}
\frac{C_\infty}{q^{x/2}}
\sum_{p \in \mc P}
\sum_{\substack{r \in \upone A \\ r^2 | a^2 - 4up}}
\frac{1}{|r|}
\sum_{\substack{v \in \upone A }}
\frac{\chi_R(v)}{|v|}.
\end{equation}

We fix a parameter $U$, depending on $x$, to be determined precisely later. We aim to control the contribution of $r \in \upone A$ with $\deg r \geq U$. Changing the order of summation and bounding the $L$-value trivially, we get that this is
$$
\frac{C_\infty}{q^{x/2}}
\sum_{p \in \mc P}
  \sum_{\substack{r \in \upone A \\ r^2 | a^2 - 4up \\ \deg r \geq U}}
    \frac{1}{|r|}
      \sum_{\substack{v \in \upone A}}
        \frac{\chi_R(v)}{|v|}
=
O \left(
\frac{x}{q^{x/2}}
  \sum_{\substack{r \in \upone A \\ U \leq  \deg r \leq x/2}}
    \frac{1}{|r|} \# \{ p \in \mc P: r^2 | a^2 - 4up\}
\right).
$$

Applying Theorem \ref{thm:Effective PNTAP}, we rewrite this as
\begin{equation}\label{eqn:StepFirstTrucation}
O \left(
  q^{x/2}
  \sum_{\substack{r \in \upone A \\ U \leq  \deg r \leq x/2}}
    \frac{1}{|r|\varphi(r^2)}
  +
  \sum_{\substack{r \in \upone A \\ U \leq  \deg r \leq x/2}}
    \frac{(\deg r)^2}{|r|}
\right).
\end{equation}

The second sum in (\ref{eqn:StepFirstTrucation}) is $O(x^3)$. For the first sum in (\ref{eqn:StepFirstTrucation}) we have
$$
\sum_{\substack{r \in \upone A \\ U \leq  \deg r \leq x/2}}
  \frac{1}{|r|\varphi(r^2)}
\leq
\frac{1}{q^{2U}}
\sum_{\substack{r \in \upone A \\ U \leq  \deg r \leq x/2}}
  \frac{1}{\varphi(r)}.
$$

Elementary manipulations give the bound 
$$
\sum_{\substack{r \in \upone A \\ U \leq  \deg r \leq x/2}}
  \frac{1}{\varphi(r)}
\ll
x
\sum_{\substack{r \in \upone A \text{  sqfree}\\ U \leq  \deg r \leq x/2}}
  \frac{1}{\varphi(r)},
$$
as well as
$$
\sum_{\substack{r \in \upone A \text{  sqfree}\\ U \leq  \deg r \leq x/2}}
  \frac{1}{\varphi(r)}
\ll x.
$$

Altogether we have
$$
\frac{C_\infty}{q^{x/2}}
\sum_{p \in \mc P}
  \sum_{\substack{r \in \upone A \\ r^2 | a^2 - 4up \\ \deg r \geq U}}
    \frac{1}{|r|}
      \sum_{\substack{v \in \upone A}}
        \frac{\chi_R(v)}{|v|}
=
O \left(
  \frac{q^{x/2}x^2}{q^{2U}}
  +
  x^3
\right).
$$
Therefore, we require that 
\begin{equation} \label{eqn:LowerBoundonU}
U > \e x \texteqn{for some } 0 < \e < 1.
\end{equation}

We fix a second parameter $V$, depending on $x$ and to be determined precisely later. Our next task is to control the contribution of $v \in \upone A$ with $\deg v \geq V$. We write this as
\begin{equation}\label{StepSecondTruncation}
\frac{C_\infty}{q^{x/2}}
\sum_{p \in \mc P}
  \sum_{\substack{r \in \upone A \\ r^2 | a^2 - 4up \\ \deg r < U}}
    \frac{1}{|r|}
      \sum_{\substack{v \in \upone A \\ \deg v \geq V}}
        \frac{\chi_R(v)}{|v|}
=
O \left(
  \frac{1}{q^{x/2}}
  \sum_{\substack{r \in \upone A \\ r^2 | a^2 - 4up \\ \deg r < U}}
    \frac{1}{|r|}
    \sum_{p \in \mc P}
      \sum_{y=V}^x
        \frac{1}{q^y}
        \sum_{\substack{v \in \upone A \\ \deg v = y}}
          \chi_R(v)
\right).
\end{equation}

The innermost sum on the right hand side of (\ref{StepSecondTruncation}) can be bounded as in the proof of Lemma \ref{lem:CharSumOnRH} by $O \left(q^{y/2} \binom{x - 2 \deg r}{y} \right)$. This leaves us with
$$
\sum_{y=V}^x
  \frac{1}{q^y}
  \sum_{\substack{v \in \upone A \\ \deg v = y}}
    \chi_R(v)
=
O \left(
\sum_{y=V}^x
\frac{1}{q^{y/2}} \binom{x - 2 \deg r}{y}
\right)
=
O \left( \frac{2^x}{q^{V/2}} \right).
$$

This shows that
$$
\sum_{p \in \mc P}
  \sum_{\substack{r \in \upone A \\ r^2 | a^2 - 4up \\ \deg r < U}}
    \frac{1}{|r|}
      \sum_{y=V}^x
        \frac{1}{q^y}
        \sum_{\substack{v \in \upone A \\ \deg v = y}}
          \chi_R(v)
=
O \left(
\frac{2^x}{q^{V/2}} 
\sum_{\substack{r \in \upone A \\ \deg r < U}}
  \frac{1}{|r|}
  \# \{p \in \mc P: r^2|a^2 - 4up  \}
\right).
$$

We apply Theorem \ref{thm:Effective PNTAP} to the inner quantity, and note that
$$
\sum_{\substack{r \in \upone A  \\ \deg r < U}}
\frac{1}{|r| \varphi(r)}
=
O (1)
\texteqn{and}
\sum_{\substack{r \in \upone A  \\ \deg r < U}}
\frac{(\deg r)^2}{|r|}
=
O (U^3),
$$
and (\ref{StepSecondTruncation}) becomes
$$
\frac{C_\infty}{q^{x/2}}
\sum_{p \in \mc P}
  \sum_{\substack{r \in \upone A \\ r^2 | a^2 - 4up \\ \deg r < U}}
    \frac{1}{|r|}
      \sum_{\substack{v \in \upone A \\ \deg v \geq V}}
        \frac{\chi_R(v)}{|v|}
=
O \left(
\frac{2^x}{q^{V/2}} \frac{q^{x/2} + U^3}{x}
\right).
$$

Therefore, we require that 
\begin{equation}\label{eqn:LowerBoundonV}
V > \left(\frac{\log 4}{\log q} + \e \right)x \texteqn{for some } \e > 0.
\end{equation}

\subsection{Extracting the main term}
We are left considering the sum
\begin{equation} \label{eqn:SumBeforePNTAP}
\frac{1}{q^{x/2}}
\sum_{u \in \F^*} 
\sum_{\substack{r \in \upone A \\ \deg r \leq U}} \frac{1}{|r|}
\sum_{\substack{p \in \mc P \\ r^2 \mid a^2 - 4up}}
\sum_{\substack{v \in \upone A \\ \deg v \leq V}}\frac{1}{|v|}\chi_R(v).
\end{equation}

We make some observations leading to conditions in our sum.

\begin{itemize}
\item If $r \mid a^2 - 4up$ and $r \mid a$, then $r \mid p$. But $\deg r \leq U$, and $U$ will be chosen strictly less than $x$, therefore the latter will not happen. This means that we can restrict the sum to a sum over $r$ satisfying
\begin{equation}\label{eqn:CongCondition}
(r,a)=1
\end{equation}.
\item The value of $\chi_R(v)$ depends $R$ modulo $v$ and not on $R$ itself, and $\gcd(R,v) \neq 1$ implies that $\chi_R(v)=0$.
\item Let $\gs$ be a representative for a class in $(A/vA)^*$. Then
$$
\frac{a^2-4up}{r^2} \equiv \gs (\mod v) \iff p \equiv \frac{a^2 - r^2 \gs}{4u} (\mod vr^2).
$$
\item Observe that $\gcd(\gs r^2 - a^2, v) = 1 $ if and only if $\gcd(\gs r^2 - a^2, vr^2) = 1$. One direction is obvious. For the other direction, suppose that $\gcd(\gs r^2 - a^2, v) = 1$ and there exists a prime $\ell$ such that $\ell \mid vr^2$ and $\ell \mid \gs r^2 - a^2$. Our hypothesis implies that $\ell \nmid v$, so it is necessary that $\ell \mid r$ and consequentially that $\ell \mid a^2$. This contradicts (\ref{eqn:CongCondition}).
\end{itemize}

We introduce the explicit conditions just discussed into (\ref{eqn:SumBeforePNTAP}) and partition the inner sum into residue classes to get 

We partition  into residue classes and explicitly introduce the conditions just discussed to write 
\begin{align*}
\frac{1}{q^{x/2}}
\sum_{\substack{r \in \upone A \\ \deg r \leq U \\ \gcd(a,r)=1}}
\sum_{\substack{v \in \upone A \\ \deg v \leq V}}\frac{1}{|rv|}
\sum_{\substack{\gs(v)^* \\ \gcd(\gs r^2 - a^2, v) = 1}}\chi_{\gs}(v)
\# \left\{
p, \deg p = x: p \equiv \frac{a^2 - r^2 \gs}{4u} (\mod vr^2)
  \right\},
\end{align*}
where $*$ indicates that the sum is taken over representatives of $(A / vA )^*$.

We now apply Theorem \ref{thm:Effective PNTAP}. We ensure that $\deg vr^2 < x$ by requiring that 
\begin{equation}\label{eqn:UpperBoundOnUV}
2U + V \leq (1-\e)x \texteqn{ for some} \e > 0.
\end{equation}
 
It follows that
$$
\# \left\{
p, \deg p = x: p \equiv \frac{a^2 - r^2 \gs}{4u} (\mod vr^2)
  \right\}
=
\frac{1}{\varphi(vr^2)} \frac{q^x}{x}
+ O \left(
x^2 q^{x/2}
\right).
$$

We bound the contribution of the error terms as follows.
\begin{align*}
\frac{1}{q^{x/2}} &
\sum_{\substack{r \in \upone A \\ \deg r \leq U \\ \gcd(a,r)=1}}
\sum_{\substack{v \in \upone A \\ \deg v \leq V}}\frac{1}{|rv|}
\sum_{\substack{\gs(v)^* \\ \gcd(\gs r^2 - a^2, v) = 1}}\chi_{\gs}(v)
x^2 q^{x/2}
 \\ &=
O \left( x^2
\sum_{\substack{r \in \upone A \\ \deg r \leq U }}\frac{1}{|r|}
\sum_{\substack{v \in \upone A \\ \deg v \leq V}}\frac{1}{|v|}
\sum_{\gs(v)^*} 1
\right) 
=
O \left( x^4 q^V
\right).
\end{align*}

We see that we need 
\begin{equation}\label{eqn:UpperBoundOnV}
V \leq x(\frac{1}{2} - \e) \texteqn{for some} \e > 0.
\end{equation}

We choose $U$ and $V$ as follows. Under the assumption that $q > 16$, we note that
$$
\frac{\log 4}{\log q} < \frac{1}{2};
$$

provided $x$ is not too small, we can choose 
$$
U := 
\left \lfloor  
\frac{1}{4}x
\right \rfloor,
V := 
\left \lceil
\left( \frac{1}{4} + \frac{\log 2}{\log q} \right) x
\right \rceil
$$
and this choice will satisfy (\ref{eqn:LowerBoundonU}), (\ref{eqn:LowerBoundonV}), (\ref{eqn:UpperBoundOnUV}), and (\ref{eqn:UpperBoundOnV}) as $x \to \infty$. Our main term, coming from the main term in the prime number theorem, becomes

\begin{equation}\label{eqn:MainTermBeforeConstantExtension}
\begin{aligned}
\frac{C_\infty}{q^{x/2}} &
\sum_{\substack{r \in \upone A \\ \deg r \leq U \\ \gcd(a,r)=1}}
\sum_{\substack{v \in \upone A \\ \deg v \leq V}}\frac{1}{|rv|}
\sum_{\substack{\gs(v)^* \\ \gcd(\gs r^2 - a^2, v) = 1}}\chi_{\gs}(v)
\frac{1}{\varphi(vr^2)} \frac{q^x}{x}
 \\ &=
 C_\infty
 \frac{q^{x/2}}{x}
\sum_{\substack{r \in \upone A \\ \deg r \leq U }}
\sum_{\substack{v \in \upone A \\ \deg v \leq V}}\frac{1}{|rv|\varphi(vr^2)}
\sum_{\substack{\gs(v)^* \\ \gcd(\gs r^2 - a^2, v) = 1}}\chi_{\gs}(v)
\end{aligned}
\end{equation}

\section{The constant $C(a)$}

We introduce the following notation. For $v,r \in \upone A$ with $\gcd(r,a) = 1$, write
$$
c(a;v,r) := \sum_{\substack{\gs(v)^* \\ \gcd(\gs r^2 - a^2, v) = 1}}\chi_{\gs}(v)
$$

With this notation, the constant appearing in our the main term of  is
$$
C_\infty
\sum_{\substack{r \in \upone A \\ \deg r \leq U }}
\sum_{\substack{v \in \upone A \\ \deg v \leq V}}\frac{c(a;v,r)}{|rv|\varphi(vr^2)}
$$

We first extend this to a sum over all $v,r \in \upone A$ and show that the infinite sum converges, and converges quickly enough so that we can replace the bounded sum in our main term by the infinite sum. Then, we rewrite the infinite sum as a product over primes in $A$, so that our constant can be more easily compared with those coming from conjectures and other results in the literature.

\subsection{Properties of $c(a; v, r)$ }
Following \cite{DavidPappalardi1999}, we write explicit expressions for $c(a;v,r)$, giving an upper bound on its absolute value. We first note that $c(a;v,r)$ is multiplicative in $v$. Indeed, let $\gcd(v_1, v_2)=1$ and for a fixed $\gs \pmod{ v_1v_2}$ there are unique $\gs_1 \pmod{ v_1}$ and $\gs_2 \pmod{ v_2}$ such that
$$
\gs  = \gs_1 + k_1 v_1 = \gs_2 + k_2 v_2 \texteqn{for some} k_1, k_2 \in A.
$$
Moreover,
$$
\gcd(\gs r^2 - a^2, v_1 v_2) = 1 \implies 
\gcd(\gs r^2 - a^2, v_1) = 1 \texteqn{and} \gcd(\gs r^2 - a^2, v_2) = 1 ,
$$
and for $i=1,2$,
$$
\gcd(\gs r^2 - a^2, v_i) = \gcd((\gs_i + k_i v_i) r^2 - a^2, v_i) = \gcd(\gs_i r^2 - a^2, v_i).
$$

Multiplicativity now follows by splitting the character sum in $c(a;v_1v_2,r)$.

We now focus on $c(a;\ell^k,r)$ where $\ell$ is a prime in $A$. We consider two cases.

If $\gcd(\ell,r)=1$ then $\gs \mapsto \gs r^2$ is a permutation of the set
$$
\{
\gs (\ell^k)^*: \gcd(\gs r^2-a^2, \ell^k)=1.
\}
$$

Therefore
\begin{align*}
c(a;\ell^k,r) = 
\sum_{\substack{\gs(\ell^k)^* \\ \gcd(\gs r^2 - a^2, \ell) = 1}}\chi_{\gs}(\ell)^k
=
\sum_{\substack{\hat\gs(\ell^k)^* \\ \gcd(\hat\gs- a^2, \ell) = 1}}\chi_{\hat\gs}(\ell)^k
=c(a;\ell^k,1) 
\end{align*}

But this last quantity is
\begin{align*}
c(a;\ell^k,1) 
=
|l|^{k-1} \left(
\left(\sum_{\hat\gs(\ell)^*} \chi_{\hat\gs}(\ell)^k \right)
- \chi_{a^2}(\ell)^k
\right).
\end{align*}

If $\ell \nmid a$, this is
$$
c(a;\ell^k,1) 
=
\left\{\begin{array}{ll}
|l|^{k-1} (|\ell|-2) & \texteqn{if} k \texteqn{even,}\\ 
-|l|^{k-1}  & \texteqn{if} k \texteqn{odd,}
\end{array}\right.
$$

If $\ell \mid a$, this is
$$
c(a;\ell^k,1) 
=
\left\{\begin{array}{ll}
|l|^{k-1} (|\ell|-1) & \texteqn{if} k \texteqn{even,}\\ 
0 & \texteqn{if} k \texteqn{odd,}
\end{array}\right.
$$

If $\gcd(\ell,r)=\ell$, it follows that $\ell \nmid a$ and the condition $\gcd(\hat\gs r^2- a^2, \ell) = 1$ is vacuous. Therefore
$$
c(a;\ell^k,r) 
=
\sum_{\hat\gs(\ell)^*} \chi_{\hat\gs}(\ell)^k 
$$

By orthogonality, this is $0$ if $k$ is odd and $|l|^{k-1} (|\ell|-1)$ if $k$ is even. The following lemma summarizes this discussion.

\begin{lemma}\label{lem:ComputedValueOfConstants}
Take the notation above. Then

$$
c(a;\ell^k,r) 
=
\left\{\begin{array}{ll}
c(a;\ell^k,\ell) & \texteqn{if} \ell \mid r\\ 
c(a;\ell^k,1)  & \texteqn{if}  \ell \nmid r;
\end{array}\right.
$$
more precisely, 
$$
c(a;\ell^k,r) 
=
\left\{\begin{array}{ll}
|l|^{k-1} (|\ell|-2) & \texteqn{if} k \texteqn{even,}\ell \nmid a, \ell \nmid r\\ 
-|l|^{k-1}  & \texteqn{if} k \texteqn{odd,}\ell \nmid a, \ell \nmid r \\
|l|^{k-1} (|\ell|-1) & \texteqn{if} k \texteqn{even,} \ell \mid a, \ell \nmid r\\ 
0 & \texteqn{if} k \texteqn{odd,} \ell \mid a, \ell \nmid r\\
|l|^{k-1} (|\ell|-1) & \texteqn{if} k \texteqn{even,}  \ell \mid r \\
0 & \texteqn{if} k \texteqn{odd,} \ell \mid r.\\ 
\end{array}\right.
$$

\end{lemma}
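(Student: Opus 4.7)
The plan is to reduce $c(a;\ell^k,r)$ to a character sum over $(A/\ell A)^\ast$ and then apply orthogonality of the quadratic residue character modulo $\ell$. The key structural observation is that $\chi_\gs(\ell)$ depends only on $\gs \bmod \ell$: by the definition in Section 2.2, $\chi_\gs(\ell)$ is $0$, $+1$, or $-1$ according as $\gs$ is divisible by $\ell$, a nonzero square mod $\ell$, or a non-square mod $\ell$. Consequently, for any condition on $\gs$ that factors through $\gs \bmod \ell$, the sum of $\chi_\gs(\ell)^k$ over $\gs \in (A/\ell^k A)^\ast$ subject to that condition equals $|\ell|^{k-1}$ times the corresponding sum over $\bar\gs \in (A/\ell A)^\ast$.

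In the case $\ell \nmid r$, multiplication by $r^2$ is a bijection on $(A/\ell^k A)^\ast$, so the substitution $\tau = \gs r^2$ carries the constraint $\gcd(\gs r^2 - a^2, \ell) = 1$ to $\gcd(\tau - a^2, \ell) = 1$; moreover $\chi_{\tau r^{-2}}(\ell)^k = \chi_\tau(\ell)^k$, since $r^{-2}$ is a square. This yields $c(a;\ell^k,r) = c(a;\ell^k,1)$, proving the first half of the lemma in this case. Collapsing to $(A/\ell A)^\ast$ gives
$$
c(a;\ell^k,1) = |\ell|^{k-1}\bigl(\,{\textstyle\sum_{\bar\gs \in (A/\ell A)^\ast}} \chi_{\bar\gs}(\ell)^k \;-\; \chi_{a^2}(\ell)^k\bigr),
$$
where the subtracted term accounts for the excluded residue $a^2 \bmod \ell$; this is $1$ when $\ell \nmid a$ (so $a^2 \bmod \ell$ is a nonzero square and does lie in $(A/\ell A)^\ast$) and $0$ when $\ell \mid a$ (the excluded residue being outside $(A/\ell A)^\ast$). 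Splitting on parity of $k$---using $\chi_{\bar\gs}(\ell)^k = 1$ for $k$ even and the orthogonality relation $\sum_{\bar\gs \in (A/\ell A)^\ast}\chi_{\bar\gs}(\ell) = 0$ for $k$ odd---recovers the first four rows of the table.

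In the case $\ell \mid r$, the running hypothesis $\gcd(r,a)=1$ forces $\ell \nmid a$, so $\gs r^2 - a^2 \equiv -a^2 \pmod \ell$ is automatically a unit modulo $\ell$; the coprimality constraint becomes vacuous. Hence
$$
c(a;\ell^k,r) = \sum_{\gs \in (A/\ell^k A)^\ast} \chi_\gs(\ell)^k = |\ell|^{k-1}\sum_{\bar\gs \in (A/\ell A)^\ast}\chi_{\bar\gs}(\ell)^k,
$$
which is $|\ell|^{k-1}(|\ell|-1)$ for $k$ even and $0$ for $k$ odd by orthogonality. This equals $c(a;\ell^k,\ell)$ (which has the same vacuous constraint) and supplies the final two rows. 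There is no substantive obstacle: the entire argument is orthogonality plus a careful bookkeeping of one excluded residue class, the only nontrivial input being that $\chi_\gs(\ell)$ factors through $\gs \bmod \ell$, which is immediate from the Dirichlet character definition at an unramified prime.
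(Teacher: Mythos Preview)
Your argument is correct and follows essentially the same route as the paper: the substitution $\tau=\gs r^2$ (using that $r^{\pm 2}$ is a square so the quadratic symbol is unchanged) to reduce the $\ell\nmid r$ case to $r=1$, the collapse from $(A/\ell^k A)^*$ to $(A/\ell A)^*$ via the observation that $\chi_\gs(\ell)$ depends only on $\gs\bmod\ell$, the subtraction of the single excluded residue $a^2$, and the vacuity of the constraint when $\ell\mid r$ forced by $\gcd(r,a)=1$. There is nothing to add.
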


Define $\kappa(\cdot)$ as the multiplicative function with values on prime powers given by
$$
\kappa(\ell^k)
=
\left\{\begin{array}{ll}
1 & \texteqn{if} k \texteqn{even,}\\ 
|\ell| & \texteqn{if} k \texteqn{odd.}
\end{array}\right.
$$

From Lemma \ref{lem:ComputedValueOfConstants} it is clear that
$$
|c(a;\ell^k,r)| \leq \frac{|\ell^k|}{\kappa(\ell^k)}
$$

From the multiplicativity of $c(a;v,r)$, we get the following bound.

\begin{corollary}\label{cor:BoundedValueOfConstants}
Take the notation above. For all $v \in \upone A$, 
$$
|c(a;v,r)| \leq \frac{|v|}{\kappa(v)}
$$
\end{corollary}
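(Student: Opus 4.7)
The corollary is an immediate multiplicative bootstrap of the prime-power bound displayed just before its statement, so the proof will be short and essentially bookkeeping.

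My plan is to factor $v$ into prime powers, $v = \prod_{i} \ell_i^{k_i}$, and invoke three facts already established in the paper. First, the function $v \mapsto c(a;v,r)$ is multiplicative in $v$ (this was shown in the paragraph beginning ``We first note that $c(a;v,r)$ is multiplicative in $v$,'' using the Chinese Remainder decomposition of representatives $\sigma \pmod{v_1 v_2}$ together with multiplicativity of the Dirichlet characters $\chi_\sigma$). Second, $|\cdot|$ is completely multiplicative on $A$. Third, $\kappa(\cdot)$ is multiplicative by definition. Therefore
\[
|c(a;v,r)| \;=\; \prod_i |c(a;\ell_i^{k_i},r)| \;\leq\; \prod_i \frac{|\ell_i^{k_i}|}{\kappa(\ell_i^{k_i})} \;=\; \frac{|v|}{\kappa(v)},
\]
where the middle inequality applies the prime-power bound $|c(a;\ell^k,r)| \leq |\ell^k|/\kappa(\ell^k)$ to each local factor.

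The only step that needs verification is that prime-power bound itself, but it is a direct consequence of the six-case computation in Lemma \ref{lem:ComputedValueOfConstants}: in every case one reads off $|c(a;\ell^k,r)| \leq |\ell|^{k-1}(|\ell|-1) \leq |\ell|^{k-1}\cdot|\ell| = |\ell^k|$ when $k$ is even (giving the bound with $\kappa(\ell^k)=1$), and $|c(a;\ell^k,r)| \leq |\ell|^{k-1} = |\ell^k|/|\ell|$ when $k$ is odd (giving the bound with $\kappa(\ell^k)=|\ell|$). Both match the required ratio $|\ell^k|/\kappa(\ell^k)$.

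There is no real obstacle here: the whole content of the corollary is that the sharp local bounds tensor up to a global bound, and this is automatic once multiplicativity of $c(a;\cdot,r)$ has been established. I would write the proof in three or four lines, pointing to the multiplicativity paragraph and to Lemma \ref{lem:ComputedValueOfConstants} for the local ingredients.
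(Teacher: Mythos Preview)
Your proposal is correct and follows exactly the paper's approach: the paper derives the prime-power bound $|c(a;\ell^k,r)| \leq |\ell^k|/\kappa(\ell^k)$ directly from the case analysis in Lemma~\ref{lem:ComputedValueOfConstants} and then invokes multiplicativity of $c(a;\cdot,r)$ to pass to general $v$. Your write-up is slightly more explicit than the paper's one-line justification, but the argument is the same.
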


\subsection{Extending the constant to an infinite sum}
For fixed $r$, Corollary \ref{cor:BoundedValueOfConstants} implies that
\begin{equation}\label{eqn:BeginExtendingConstant}
\sum_{\substack{v \in \upone A \\ \deg v  \geq V}}\frac{|c(a;v,r)|}{|v|\varphi(v)}
\leq
\sum_{\substack{v \in \upone A \\ \deg v \geq V}}\frac{1}{\kappa(v)\varphi(v)},
\end{equation}

and we wish to control the growth of this sum as $y \to \infty$. If $v \in \upone A$ can be written as $v = m^2n$ with $m,n \in \upone A$ and $n$ squarefree, then $\kappa(v)$ is simply $|n|$. With this we rewrite the sum on the right hand side of (\ref{eqn:BeginExtendingConstant}) as
$$
\sum_{y \geq V}
\sum_{\substack{m \in \upone A \\ \deg m \leq y/2}}
\sum_{\substack{n \in \upone A \text{  sqfree}\\ \deg n = y - 2\deg m}}
\frac{1}{|n|\varphi(m^2n)}
=
\sum_{y \geq V}
\sum_{\substack{m \in \upone A \\ \deg m \leq y/2}}
\sum_{\substack{n \in \upone A \text{  sqfree}\\ \deg n = y - 2\deg m}}
\frac{1}{|mn|\varphi(mn)},
$$
and bound it trivially by
\begin{align*}
O
\left(
\sum_{y \geq V}
  \sum_{\substack{m \in \upone A \\ \deg m \leq y/2}}
    \frac{1}{|m|\varphi(m)}
  \sum_{\substack{n \in \upone A \text{  sqfree}\\ \deg n = y - 2\deg m}}
    \frac{1}{|n|\varphi(n)}
\right)
&=
O
\left(
\sum_{y \geq V}
  \frac{1}{q^y}
  \sum_{\substack{m \in \upone A \\ \deg m \leq y/2}}
    \frac{|m|}{\varphi(m)}
  \sum_{\substack{n \in \upone A \text{  sqfree}\\ \deg n = y - 2\deg m}}
    \frac{1}{\varphi(n)}
\right) \\
&=
O
\left(
\sum_{y \geq V}
  \frac{y}{q^y}
  \sum_{\substack{m \in \upone A \\ \deg m \leq y/2}}
    \frac{|m|}{\varphi(m)}
  \right)
\end{align*}

We bound $\varphi$ crudely from below as $\varphi(m) \gg (q-1)^{\deg m}$, using the fact that $q>2$ since $q$ is an odd prime power. With this bound we get that
$$
 \frac{|m|}{\varphi(m)} \ll \left( \frac{3}{2} \right)^{\deg m}.
$$

Altogether we have
$$
\sum_{\substack{v \in \upone A \\ \deg v  \geq V}}\frac{|c(a;v,r)|}{|v|\varphi(v)}
=
O \left(
\sum_{y \geq V}
y
\frac{\left( \frac{3}{2} \right)^{y}}{q^{y/2}}
\right).
$$

Since $q$ is an odd prime power, this sum converges, and we have the bound
\begin{equation}\label{eqn:ConstantBoundPreliminary}
\sum_{\substack{v \in \upone A \\ \deg v  \geq V}}\frac{|c(a;v,r)|}{|v|\varphi(v)}
=
O \left(
\frac{1}{\left( \frac{2}{3}q^{1/2} \right)^V}
\right)
\end{equation}

With this, we can bound the sums
\begin{equation}\label{eqn:InfiniteSumsConstant}
\sum_{\substack{r \in \upone A \\ \gcd(r,a)=1}}
\sum_{\substack{v \in \upone A \\ \deg v > V}}
\frac{c(a;v,r)}{|rv|\varphi(vr^2)}
\hspace{8pt}\texteqn{and}
\sum_{\substack{r \in \upone A \\ \gcd(r,a)=1 \\ \deg r > U}}
\sum_{\substack{v \in \upone A \\ \deg v \leq V}}
\frac{c(a;v,r)}{|rv|\varphi(vr^2)}.
\end{equation}
For the first sum in (\ref{eqn:InfiniteSumsConstant}), we use (\ref{eqn:ConstantBoundPreliminary}) as follows:
\begin{align*}
\sum_{\substack{r \in \upone A \\ \gcd(r,a)=1}}
\sum_{\substack{v \in \upone A \\ \deg v > V}}
\frac{|c(a;v,r)|}{|rv|\varphi(vr^2)}
&\leq
\sum_{\substack{r \in \upone A \\ \gcd(r,a)=1}}
\frac{1}{|r|\varphi(r^2)}
\sum_{\substack{v \in \upone A \\ \deg v > V}}
\frac{|c(a;v,r)|}{|v|\varphi(v)}
\\ &\ll
\frac{1}{\left( \frac{2}{3}q^{1/2} \right)^V}
\sum_{\substack{r \in \upone A \\ \gcd(r,a)=1}}
\frac{1}{|r|\varphi(r^2)}
\end{align*}

The inner sum can be bounded above by
$$
\sum_{\substack{r \in \upone A }} \frac{1}{|r|^2} = O(1).
$$

For the second sum in (\ref{eqn:InfiniteSumsConstant}), we extend the sum over $v$ and note that
$$
\sum_{\substack{r \in \upone A \\ \gcd(r,a)=1 \\ \deg r > U}}
  \frac{1}{|r|\varphi(r^2)} 
  \sum_{\substack{v \in \upone A}}
    \frac{|c(a;v,r)|}{|v|\varphi(v)} 
\ll
\sum_{\substack{r \in \upone A \\ \gcd(r,a)=1 \\ \deg r > U}}
  \frac{1}{|r|^2} 
  \ll
  \frac{1}{q^U}.
$$

We record the result of the computations above in the following lemma.

\begin{lemma}\label{lem:ExtendConstant}
Take the notation above. Then,
$$
\left|
\sum_{\substack{r \in \upone A \\ \gcd(r,a)=1}}
\sum_{\substack{v \in \upone A}}
\frac{c(a;v,r)}{|rv|\varphi(vr^2)}
-
 \sum_{\substack{r \in \upone A \\ \deg r \leq U \\ \gcd(r,a)=1}}
\sum_{\substack{v \in \upone A \\ \deg v \leq V}}
\frac{c(a;v,r)}{|rv|\varphi(vr^2)}
\right| 
\ll
\frac{1}{q^U}
+
\frac{1}{\left( \frac{2}{3}q^{1/2} \right)^V}
$$
\end{lemma}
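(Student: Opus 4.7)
The plan is to recognize that the quantity inside the absolute value is exactly the part of the infinite sum omitted by the truncation, and to decompose this tail disjointly. Writing $\Sigma_{\infty}$ for the unrestricted double sum and $\Sigma_{U,V}$ for the truncated one, the difference $\Sigma_\infty - \Sigma_{U,V}$ splits as
$$
\sum_{\substack{r \in \upone A \\ \gcd(r,a)=1}}
  \sum_{\substack{v \in \upone A \\ \deg v > V}}
    \frac{c(a;v,r)}{|rv|\varphi(vr^2)}
\;+\;
\sum_{\substack{r \in \upone A \\ \deg r > U \\ \gcd(r,a)=1}}
  \sum_{\substack{v \in \upone A \\ \deg v \leq V}}
    \frac{c(a;v,r)}{|rv|\varphi(vr^2)},
$$
which are precisely the two quantities displayed in (\ref{eqn:InfiniteSumsConstant}). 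The two index sets are disjoint (the first forces $\deg v > V$, the second forces $\deg v \leq V$) and their union is the complement of $\{\deg r \leq U,\; \deg v \leq V\}$ inside $\{\gcd(r,a)=1\}$, so the decomposition is exact.

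Next, I would apply the triangle inequality termwise and invoke the two tail estimates already established in the paragraphs preceding the lemma. For the first piece, factor out the $r$-sum, apply (\ref{eqn:ConstantBoundPreliminary}) to the inner $v$-sum, and use $\sum_{r \in \upone A} 1/(|r|\varphi(r^2)) \ll \sum_{r \in \upone A} 1/|r|^2 = O(1)$ to get a contribution of $O\!\left(1/(\tfrac{2}{3}q^{1/2})^V\right)$. For the second piece, extend the inner $v$-sum to all of $\upone A$ and use Corollary \ref{cor:BoundedValueOfConstants} together with the convergence of $\sum_v 1/(\kappa(v)\varphi(v))$ implicit in (\ref{eqn:ConstantBoundPreliminary}) (taking $V = 0$) to bound the inner sum by $O(1)$; the remaining tail $\sum_{\deg r > U} 1/|r|^2$ is then $O(1/q^U)$. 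Summing the two contributions yields the claimed bound.

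I do not anticipate any real obstacle here, since the analytic content — the pointwise bound on $c(a;v,r)$ from Lemma \ref{lem:ComputedValueOfConstants} and Corollary \ref{cor:BoundedValueOfConstants}, the decomposition $v = m^2 n$ exploited via the multiplicative function $\kappa$, and the crude estimate $\varphi(m) \gg (q-1)^{\deg m}$ that produces the base $\tfrac{2}{3}q^{1/2}$ — has already been carried out in deriving (\ref{eqn:ConstantBoundPreliminary}). The lemma is essentially a packaging statement that bundles those two tail bounds into a single inequality, and the only point requiring care is the disjoint decomposition of the complement, which is handled above.
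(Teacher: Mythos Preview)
Your proposal is correct and follows the paper's own argument essentially line for line: the same disjoint decomposition into the two tails of (\ref{eqn:InfiniteSumsConstant}), the same use of (\ref{eqn:ConstantBoundPreliminary}) together with $\sum_r 1/(|r|\varphi(r^2)) \leq \sum_r 1/|r|^2 = O(1)$ for the first, and the same extension of the $v$-sum followed by $\sum_{\deg r > U} 1/|r|^2 \ll q^{-U}$ for the second. There is nothing to add.
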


In particular, the sums over $v$ and $r$ in (\ref{eqn:MainTermBeforeConstantExtension}) can be extended to sums over suitable elements of $\upone A$ of all degree, at the cost of an error of
$$
O \left(
  \frac{q^{x/2} }{x}
  \left(
    \frac{1}{q^U}
    +
    \frac{1}{\left( \frac{2}{3}q^{1/2} \right)^V}
  \right)
\right),
$$

and this expression is $\ul o \left(  \frac{q^{x/2} }{x} \right)$ as $x \to \infty$.

\subsection{Product expansion of the constant}
To complete the proof, we rewrite the sum
$$
\sum_{\substack{r \in \upone A \\ \gcd(r,a)=1}}
  \sum_{\substack{v \in \upone A}}
    \frac{c(a;v,r)}{|rv|\varphi(vr^2)},
$$
in order to see that it gives rise to $C(a)$ as defined in the statement of Theorem \ref{thm:MainTheoremSmallBox}. The computations needed are elementary and reliant on properties of multiplicative functions. In particular, we follow \cite[Proof of Lemma 4.1, p. 16-17]{DavidPappalardi1999}

From (\ref{eqn:ProductPropertyPhi}) we get
$$
\sum_{\substack{r \in \upone A \\ \gcd(r,a)=1}}
\sum_{\substack{v \in \upone A}}
\frac{c(a;v,r)}{|rv|\varphi(vr^2)}
=
\sum_{\substack{r \in \upone A \\ \gcd(r,a)=1}}
\frac{1}{|r|\varphi(r^2)} 
\sum_{\substack{v \in \upone A}}
\frac{c(a;v,r)}{|v|\varphi(v)}
\frac{\varphi(\gcd(v,r^2))}{|\gcd(v,r^2)|}
$$

We expand the inner sum as a product over primes to write
$$
\sum_{\substack{r \in \upone A \\ \gcd(r,a)=1}}
\frac{1}{|r|\varphi(r^2)} 
\prod_\ell \left(
\sum_{\ga \geq 0} 
\frac{c(a;\ell^\ga,r)}{|\ell^\ga|\varphi(\ell^\ga)}
\frac{\varphi(\gcd(\ell^\ga,r^2))}{|\gcd(\ell^\ga,r^2)|}
\right)
$$

Whenever $\ell \nmid r$, this last factor vanishes and $c(a;\ell^\ga,r) = c(a;\ell^\ga,1)$, so we extract a product from the sum to get
$$
\prod_\ell \left(
\sum_{\ga \geq 0} 
\frac{c(a;\ell^\ga,1)}{|\ell^\ga|\varphi(\ell^\ga)}
\right)
\sum_{\substack{r \in \upone A \\ \gcd(r,a)=1}}
\frac{1}{|r|\varphi(r^2)} 
\prod_{\ell \mid r}
\left(
\frac{ 
\sum_{\ga \geq 0} 
\frac{c(a;\ell^\ga,r)}{|\ell^\ga|\varphi(\ell^\ga)}
\frac{\varphi(\gcd(\ell^\ga,r^2))}{|\gcd(\ell^\ga,r^2)|}
}{
\sum_{\ga \geq 0} 
\frac{c(a;\ell^\ga,1)}{|\ell^\ga|\varphi(\ell^\ga)}
}
\right)
$$

The expression inside of this sum is a multiplicative function over $r$, so we can write the sum as a product over primes $\ell' \nmid a$. Since the product inside of the sum runs over primes dividing $r$, $r = \ell^\gb, \gb \neq 0 \implies \ell'=\ell$. The sum in the denominator gives cancellation in the first product, so we write
\begin{equation}\label{eqn:IntermediateStepProductAsConstant}
\prod_{\ell \mid a} \left(
\sum_{\ga \geq 0} 
\frac{c(a;\ell^\ga,1)}{|\ell^\ga|\varphi(\ell^\ga)}
\right)
\prod_{\ell \nmid a} \left(
\sum_{\substack{\ga \geq 0 \\ \gb \geq 0}} 
\frac{1}{|\ell^\gb| \varphi(\ell^{2\gb})}
\frac{c(a;\ell^\ga,\ell^\gb)}{|\ell^\ga|\varphi(\ell^\ga)}
\frac{\varphi(\gcd(\ell^\ga,\ell^{2\gb}))}{|\gcd(\ell^\ga,\ell^{2\gb})|}
\right)
\end{equation}

This last factor is $1 - \frac{1}{|\ell|}$ if $\ga\gb \neq 0$ and $1$ otherwise. Therefore, the remaining sum becomes
\begin{align*}
\sum_{\substack{\ga \geq 0 \\ \gb \geq 0}} 
&\frac{1}{|\ell^\gb| \varphi(\ell^{2\gb})}
\frac{c(a;\ell^\ga,\ell^\gb)}{|\ell^\ga|\varphi(\ell^\ga)}
\frac{\varphi(\gcd(\ell^\ga,\ell^{2\gb}))}{|\gcd(\ell^\ga,\ell^{2\gb})|}
\\ &=
1 
+ 
\sum_{\ga \geq 1} 
\frac{c(a;\ell^\ga,1)}{|\ell^\ga|\varphi(\ell^\ga)}
+
\sum_{\gb \geq 1} 
\frac{1}{|\ell^\gb| \varphi(\ell^{2\gb})}
+
\left( 1 - \frac{1}{|\ell|} \right)
\sum_{\gb \geq 1} 
\frac{1}{|\ell^\gb| \varphi(\ell^{2\gb})}
\sum_{\ga \geq 1} 
\frac{c(a;\ell^\ga,\ell)}{|\ell^\ga|\varphi(\ell^\ga)}
\\ &=
1 
+ 
\sum_{\ga \geq 1} 
\frac{c(a;\ell^\ga,1)}{|\ell^\ga|\varphi(\ell^\ga)}
+ 
\frac{1}{|\ell|^3-1} 
\left( 
\frac{|\ell|}{|\ell|-1} 
+ 
\sum_{\ga \geq 1} 
\frac{c(a;\ell^\ga,\ell)}{|\ell^\ga|\varphi(\ell^\ga)}
\right)
\end{align*}

Using Lemma \ref{lem:ComputedValueOfConstants}, we compute the remaining sums:
$$
\sum_{\ga \geq 1} 
\frac{c(a;\ell^\ga,1)}{|\ell^\ga|\varphi(\ell^\ga)}
=
\left\{\begin{array}{ll}
\frac{1}{|\ell|^2-1}  & \texteqn{if} \ell \mid a\\ 
\frac{-2}{(|\ell|^2-1)(|\ell|-1)}  & \texteqn{if} \ell \nmid a,
\end{array}\right.
$$
and
$$
\sum_{\ga \geq 1} 
\frac{c(a;\ell^\ga,\ell)}{|\ell^\ga|\varphi(\ell^\ga)}
=
\frac{1}{|\ell|^2-1} 
$$

Returning to \ref{eqn:IntermediateStepProductAsConstant}, we now have
$$
\prod_{\ell \mid a}
\left(
1 + \frac{1}{|\ell|^2-1} 
\right)
\prod_{\ell \nmid a}
\left(
1 - \frac{2}{(|\ell|^2-1)(|\ell|-1)}  
+
\frac{1}{|\ell|^3-1} 
\left( 
\frac{|\ell|}{|\ell|-1} 
+ 
\frac{1}{|\ell|^2-1} 
\right) 
\right),
$$

which simplifies to
$$
\prod_{\ell \mid a}
\left(
1 - \frac{1}{|\ell|^2}
\right) ^{-1}
\prod_{\ell \nmid a}
\left(
\frac{|\ell|(|\ell|^2 - |\ell|-1)}{(|\ell|^2-1)(|\ell|-1)}
\right).
$$

\section*{Acknowledgements}
The author thanks A. C. Cojocaru for suggesting this problem and for conversations about it, Matei Vlad for helpful suggestions regarding character sum estimates, and an anonymous referee for helpful comments on an earlier version of this paper. This project is part of the author's doctoral thesis at the University of Illinois at Chicago, and was completed while the author was on an Abraham Lincoln Fellowship; the author thanks the Department of Mathematics, Statistics, and Computer Science and the Graduate College at the University of Illinois at Chicago for providing conditions suitable for research.

\bibliographystyle{amsalpha}
\bibliography{drinfeldmodules}
\end{document}